\documentclass[12pt]{amsart}
\usepackage{amsmath}
\usepackage{graphicx}
\usepackage{hyperref}

\usepackage{mathtools}
\usepackage{amsfonts}
\usepackage{amssymb}
\usepackage[T1]{fontenc}
\usepackage{amsthm}
\usepackage{fullpage}
\usepackage{enumitem}
\usepackage{bbm}
\usepackage[usestackEOL]{stackengine}
\usepackage{multirow}
\newtheorem{theorem}{Theorem}[section]
\newtheorem{corollary}{Corollary}[theorem]
\newtheorem{lemma}[theorem]{Lemma}

\newtheorem{example}{Example}[section]
\newtheorem{proposition}{Proposition}[section]
\theoremstyle{definition}

\theoremstyle{remark}

\numberwithin{equation}{section}

\title{Idempotents of $\mathbb{Z}_n$}

\keywords{Ring of integers modulo $n$, Idempotent element}
\subjclass[2020]{12E20, 11T23}

\author{Suman Mondal}
\address{Department of Mathematical Sciences, Tezpur University, Tezpur, Assam, 784028, India}
\email{mondalmondalsuman@gmail.com}

\author{Dhiren Kumar Basnet}
\address{Department of Mathematical Sciences, Tezpur University, Tezpur, Assam, 784028, India}
\email{dbasnet@tezu.ernet.in}



%

\begin{document}
	\begin{abstract}
		We know that if there are $k$ distinct prime factors of $n \in \mathbb{N}$, then the ring $\mathbb{Z}_n$ of integers modulo $n$ has exactly $2^k$ idempotent elements.  In this article, we try to describe all the idempotents of $\mathbb{Z}_n$ for any given $n \in \mathbb{N}$.
	\end{abstract}
	
	\maketitle
	
	\section{Introduction}
	An element $\overline{a} \in \mathbb{Z}_n$ is said to be an idempotent element if $\overline{a}^2 = \overline{a}$ or $a^2 \equiv a\;(\bmod\; n)$ or $n \mid {a^2-a}$. Clearly $ \overline{0}, \overline{1}$ are  always idempotents of $\mathbb{Z}_n$, for $n (\geq 2) \in \mathbb{N}$. These two idempotents are called the trivial idempotents. If $1 \leq a \leq {n-1}$ and $\overline{a}$ is an idempotent element of $\mathbb{Z}_n$, then trivially $\overline{1} - \overline{a}$ i.e., $\overline{n+1-a}$   is also an idempotent of $\mathbb{Z}_n$. If $n= p_1^{\alpha _1}p_2^{\alpha _2}\cdots p_k^{\alpha _k}$, where $k \in \mathbb{N}$ and for $1\leq i \leq k$, $\alpha_i \in \mathbb{N}$ and $p_i$'s are distinct primes, then there are exactly $2^k$ number of idempotents in $\mathbb{Z}_n$ \cite{mittal}, including $\overline{0}$ and $\overline{1}$.\\
	Let us start with an example and find the $2^3=8$ idempotents of $\mathbb{Z}_{30}$.  We try to find out the $6$ nontrivial idempotents of $\mathbb{Z}_{30}$ and we do that using the definition.\\
	Let $\overline{a}$ be a nontrivial idempotent of $\mathbb{Z}_{30}$, then $30 \mid {a^2-a}$. So, $2\mid a(a-1)$; $3\mid a(a-1)$; $5\mid a(a-1)$, i.e., $2 \mid a$ or $2 \mid a-1$; $3 \mid a$ or $3 \mid a-1$; $5 \mid a$ or $5 \mid a-1$. Out of these choices, we consider the following - \\
	For $2\mid a$, $3 \mid a$, $5 \mid a-1$, we get  $\overline{a}=\overline{6}$ and $\overline{30+1-6}=\overline{25}$ are idempotents of $\mathbb{Z}_{30}$.\\
	For $2\mid a-1$, $3 \mid a$, $5 \mid a $, we get  $\overline{a}=\overline{15}$ and $\overline{30+1-15}=\overline{16}$ are idempotents of $\mathbb{Z}_{30}$.\\
	For $2\mid a$, $3 \mid a-1$, $5 \mid a $, we get $\overline{a}=\overline{25}$ and $\overline{30+1-25}=\overline{6}$ are idempotents of $\mathbb{Z}_{30}$.\\
	For $2\mid a$, $3 \mid a-1$, $5 \mid a  $, we get   $\overline{a}=\overline{10}$ and $\overline{30+1-10}=\overline{21}$ are idempotent of $\mathbb{Z}_{30}$.\\
	Though there are so many choices left to consider, we stop here, as from these choices we can conclde that $\{ \overline{0}, \overline{1}, \overline{6}, \overline{10}, \overline{15}, \overline{16}, \overline{21}, \overline{25} \}$ is the complete list of idempotent elements of $\mathbb{Z}_{30}$.\\
	So, in this example using the definition, easily we can point out the complete list of idempotents of $\mathbb{Z}_{30}$. However, it may be a tedious job to find the idempotents of $\mathbb{Z}_n$ by this trial and error method for larger values of $n$.  \\
	In \cite{sib}, for some particular values of $n$, idempotents of $\mathbb{Z}_{n}$ have been discussed. In this paper, we develop a systematic approach to point out all the idempotents of $\mathbb{Z}_{n}$. From now onwards, we consider that $n(\geq 6) \in \mathbb{Z}$.\\

	\section{preliminaries}
	In this section we develop some preliminary results, that will help us in proving the main results.\\
	\begin{theorem}
		Let $n=2\cdot m$, where $m(\geq 3) \in \mathbb{Z} $ and $(m,2)=1$, then $\overline{m}$ and $\overline{m+1}$ are nontrivial idempotents of $\mathbb{Z}_{n}$.
	\end{theorem}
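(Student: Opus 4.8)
The plan is to work directly from the definition: $\overline{a}$ is an idempotent of $\mathbb{Z}_n$ precisely when $n \mid a^2 - a$, so I must verify this divisibility for $a = m$ and for $a = m+1$ with $n = 2m$, and then confirm that neither of the two residues coincides with $\overline{0}$ or $\overline{1}$.

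First I would establish that $\overline{m}$ is an idempotent. Writing $m^2 - m = m(m-1)$, the key observation is that the hypothesis $(m,2)=1$ forces $m$ to be odd, so $m-1$ is even. Hence $2 \mid (m-1)$ while trivially $m \mid m(m-1)$, and since $(m,2)=1$ these two factors are coprime, giving $2m \mid m(m-1)$. This is exactly the condition $n \mid m^2 - m$, so $\overline{m}^2 = \overline{m}$.

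For $\overline{m+1}$ I would appeal to the complement principle recalled in the introduction: if $\overline{a}$ is an idempotent of $\mathbb{Z}_n$ then so is $\overline{n+1-a}$. Since $n+1-m = 2m+1-m = m+1$, the idempotency of $\overline{m}$ immediately yields that of $\overline{m+1}$. Alternatively one can argue directly, since $(m+1)^2-(m+1) = m(m+1)$, and now $m+1$ is even because $m$ is odd, so again $2m \mid m(m+1)$.

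I expect no serious obstacle here, as the entire argument rests on the single elementary fact that, once $m$ is odd, one of $m-1$ or $m+1$ supplies the missing factor of $2$ to complement the evident factor of $m$. The only point requiring a little care is the nontriviality claim, which I would settle by a range argument: because $m \geq 3$ we have $1 < m < m+1 < 2m = n$, so both $m$ and $m+1$ lie strictly between $1$ and $n$, whence $\overline{m}, \overline{m+1} \notin \{\overline{0}, \overline{1}\}$. It is precisely here that the bound $m \geq 3$ (guaranteeing $m \neq 1$ and $m+1 < 2m$) is used, so both residues are genuinely nontrivial idempotents.
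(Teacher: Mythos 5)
Your proposal is correct and follows essentially the same route as the paper: both show $2\mid m-1$ from the oddness of $m$, combine with $m\mid m(m-1)$ and $(m,2)=1$ to get $2m\mid m^2-m$, deduce $\overline{m+1}$ via the complement $\overline{n+1-m}$, and settle nontriviality by the range bound coming from $m\geq 3$. Your extra direct verification for $\overline{m+1}$ is a harmless addition, not a different method.
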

	\begin{proof}
		Given that $m(\geq 3) \in \mathbb{Z} $ and $(m,2)=1$. Then $m$ must be odd.
		So, $2 \mid m-1$ and trivially $m\mid m(m-1)$. As $(m,2)=1$, we have $n=2\cdot m \mid m^2-m.$
		Therefore, $\overline{m}$ is an idempotent of $\mathbb{Z}_{n}$.\\
		Consequently $\overline{n-m+1} = \overline{m+1}$ is also an idempotent of $\mathbb{Z}_{n}$. As $3 \leq m \leq {n-1}$, so  $\overline{m}$ and $\overline{m+1}$ are nontrivial idempotents of $\mathbb{Z}_{n}$. $\square$
	\end{proof}
	Theorem 2.1 is not always sufficient to point out all idempotents of $\mathbb{Z}_{n}$. However we observe in the next result that, it's possible to find all the idempotents for some particular values of $m$. 
	\begin{corollary}
		Let $n=2\cdot p^\alpha $, where $p$ is an odd prime and $\alpha \in \mathbb{N} $, then $\{ \overline{0}, \overline{1}, \overline{p^\alpha}, \overline{p^\alpha +1}\}$ is the complete list of idempotents of $\mathbb{Z}_n$.
	\end{corollary}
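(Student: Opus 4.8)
The plan is to combine the counting fact cited in the introduction with Theorem~2.1. Since $n = 2 \cdot p^\alpha$ with $p$ an odd prime, the integer $n$ has exactly two distinct prime factors, namely $2$ and $p$. By the result of \cite{mittal} recalled in the introduction, $\mathbb{Z}_n$ therefore has exactly $2^2 = 4$ idempotent elements. So the entire argument reduces to exhibiting four \emph{distinct} idempotents; once that is done, the list must be complete because the total count is already pinned down to be $4$.

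First I would produce the four candidates. The trivial idempotents $\overline{0}$ and $\overline{1}$ are idempotents of $\mathbb{Z}_n$ for every $n \geq 2$. For the nontrivial ones, I would set $m = p^\alpha$ and check that the hypotheses of Theorem~2.1 hold: since $p$ is an odd prime we have $p \geq 3$, hence $m = p^\alpha \geq 3$, and moreover $m$ is odd so that $(m, 2) = 1$. Writing $n = 2 \cdot m$, Theorem~2.1 then applies directly and yields that $\overline{p^\alpha} = \overline{m}$ and $\overline{p^\alpha + 1} = \overline{m+1}$ are nontrivial idempotents of $\mathbb{Z}_n$.

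The one point that genuinely requires a line of verification — the closest thing to an obstacle here — is that these four elements are pairwise distinct modulo $n$. I would argue this by noting that all four representatives lie in the range $\{0, 1, \dots, n-1\}$: indeed $0 < 1 < p^\alpha < p^\alpha + 1$, and since $p^\alpha \geq 3$ we get $p^\alpha + 1 \leq 2p^\alpha - 1 < n = 2p^\alpha$. Because distinct representatives in this range are distinct in $\mathbb{Z}_n$, the four idempotents $\overline{0}, \overline{1}, \overline{p^\alpha}, \overline{p^\alpha+1}$ are all different.

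Finally I would close the argument: we have displayed $4$ distinct idempotents of $\mathbb{Z}_n$, and $\mathbb{Z}_n$ has exactly $4$ idempotents, so
\[
\{\, \overline{0},\ \overline{1},\ \overline{p^\alpha},\ \overline{p^\alpha + 1} \,\}
\]
must be the complete list, as claimed. The whole proof is short and hinges on the interplay between the exact count $2^k$ and the explicit idempotents supplied by Theorem~2.1; the only care needed is the elementary distinctness check and confirming that $m = p^\alpha$ satisfies the oddness and size hypotheses of that theorem.
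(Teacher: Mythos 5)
Your proposal is correct and follows essentially the same route as the paper: the paper's proof simply takes $m = p^\alpha$ in Theorem~2.1 and implicitly combines this with the $2^k$ count from the introduction to conclude the list is complete. You have merely spelled out the hypothesis checks and the distinctness verification that the paper leaves implicit.
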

	\begin{proof}
		Taking $m=p^\alpha$ and using Theorem 2.1, we have our desired result.
	\end{proof}
	\begin{example}
		For $n=2\cdot 13^2$, using Corollary 2.1.1, we have that $\{ \overline{0}, \overline{1}, \overline{169}, \overline{170} \} $ is the complete list of idempotents of $\mathbb{Z}_{338}$.
	\end{example}   
	\begin{theorem}
		For $n=3\cdot m$, where $m(\geq 2) \in \mathbb{Z} $ and $(m,3)=1$, we have
		\begin{align*}
			& (a) \; \text{If} \; 3 \mid m+1 \; \text{or} \; 3\mid m-2, \; \text{then } \overline{m+1} \text{ and } \overline{2m} \; \text{ are nontrivial idempotents of } \mathbb{Z}_n,\\
			& (b) \; \text{If} \; 3 \mid m+2 \; \text{or} \; 3\mid m-1, \; \text{then } \overline{m} \text{ and } \overline{2m+1} \; \text{ are nontrivial idempotents of } \mathbb{Z}_n.
		\end{align*}
	\end{theorem}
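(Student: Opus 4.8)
The plan is to exploit the coprimality $(m,3)=1$ to split the idempotent condition $3m \mid a(a-1)$ into the two independent divisibilities $3 \mid a(a-1)$ and $m \mid a(a-1)$. For each proposed element $\overline{a}$ I would verify these two conditions separately, then deduce $3m \mid a(a-1)$ and finish by checking that $\overline a$ is nontrivial.

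First I would reconcile the phrasing of the hypotheses, which is where a little care is needed. The conditions $3 \mid m+1$ and $3 \mid m-2$ both say $m \equiv 2 \pmod 3$, so part $(a)$ is precisely the case $m \equiv 2 \pmod 3$; likewise $3 \mid m+2$ and $3 \mid m-1$ both say $m \equiv 1 \pmod 3$, so part $(b)$ is precisely the case $m \equiv 1 \pmod 3$. Since $(m,3)=1$ rules out $m \equiv 0$, these two cases are exhaustive, and in each part there is really only a single residue class to analyze rather than two separate subcases.

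Next I would carry out the (routine) verification. The divisibility by $m$ is automatic in every case: the candidates $m$ and $2m$ satisfy $a \equiv 0 \pmod m$, while $m+1$ and $2m+1$ satisfy $a \equiv 1 \pmod m$, so $m \mid a(a-1)$ always holds. For divisibility by $3$ I would substitute the residue of $m$. In part $(a)$, with $m \equiv 2 \pmod 3$, one gets $m+1 \equiv 0$ and $2m \equiv 1 \pmod 3$, hence $3 \mid (m+1)$ and $3 \mid (2m-1)$; in part $(b)$, with $m \equiv 1 \pmod 3$, one gets $m \equiv 1$ and $2m+1 \equiv 0 \pmod 3$, hence $3 \mid (m-1)$ and $3 \mid (2m+1)$. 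Combining with the $m$-divisibility yields $3m \mid a(a-1)$ in each instance, so each $\overline a$ is an idempotent.

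Finally I would confirm nontriviality and note the pairing. Since $m \geq 2$, each candidate lies in the range $2 \leq a \leq n-1$ (the tightest bound being $2m+1 \leq 3m-1 \iff m \geq 2$), so none of them equals $\overline 0$ or $\overline 1$. I would also observe that the two idempotents in each part sum to $n+1$, since $(m+1)+2m = 3m+1 = n+1$ and $m+(2m+1) = 3m+1 = n+1$; thus in each part the second idempotent is simply $\overline 1 - \overline a$, consistent with the observation in the introduction. There is no real obstacle here: once the coprime splitting is set up the computation is mechanical, and the only genuine point of attention is the bookkeeping — matching each candidate to the correct residue of $m$ modulo $3$ and recognizing that the two divisibility hypotheses stated in each part describe one and the same congruence.
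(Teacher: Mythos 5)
Your proof is correct and follows essentially the same route as the paper: the paper omits an explicit proof of this theorem, stating only that it is "similar to Theorem 2.1," whose proof is exactly your coprime-factor splitting ($3 \mid a(a-1)$ and $m \mid a(a-1)$ separately, hence $3m \mid a(a-1)$) together with the $\overline{n+1-a}$ pairing and the range check for nontriviality. Your added observation that the two stated divisibility hypotheses in each part are the same congruence is a fair point of tidying, but it does not change the argument.
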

	

	\begin{theorem}
		For $n=5\cdot m$, where $m(\geq 2) \in \mathbb{Z} $ and $(m,5)=1$, we have
		\begin{align*}
			& (a) \; \text{If} \; 5 \mid m-1 \; \text{or} \; 5\mid m+4, \; \text{then } \overline{m} \text{ and } \overline{4m+1} \; \text{ are nontrivial idempotents of } \mathbb{Z}_n,\\
			& (b) \; \text{If} \; 5 \mid m-2 \; \text{or} \; 5\mid m+3, \; \text{then } \overline{3m} \text{ and } \overline{2m+1} \; \text{ are nontrivial idempotents of } \mathbb{Z}_n,\\
			& (c) \; \text{If} \; 5 \mid m-3 \; \text{or} \; 5\mid m+2, \; \text{then } \overline{2m} \text{ and }  \overline{3m+1} \; \text{ are nontrivial idempotents of } \mathbb{Z}_n,\\
			& (d) \; \text{If} \; 5 \mid m-4 \; \text{or} \; 5\mid m+1, \; \text{then } \overline{4m} \text{ and }  \overline{m+1} \; \text{ are nontrivial idempotents of } \mathbb{Z}_n.
		\end{align*} 
	\end{theorem}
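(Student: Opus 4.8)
The plan is to exploit the coprime factorization $n = 5 \cdot m$ together with the complementation trick from the introduction, reducing the whole statement to a single observation about multiples of $m$. Since $\gcd(m,5) = 1$, the Chinese Remainder Theorem tells us that $\overline{a}$ is idempotent in $\mathbb{Z}_n$ precisely when $5 \mid a(a-1)$ and $m \mid a(a-1)$ hold simultaneously. The point is that the second condition is automatic for any $a$ that is a multiple of $m$, so I would search for idempotents of the form $\overline{cm}$ with $c \in \{1,2,3,4\}$; for such an $a$ the only thing left to check is $5 \mid cm(cm-1)$.

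Next I would clear away the apparent multiplicity of hypotheses. Because $m+4 = (m-1)+5$, the condition $5 \mid m-1$ is equivalent to $5 \mid m+4$, and likewise in parts (b), (c), (d); thus the four parts are nothing but the four cases $m \equiv 1, 2, 3, 4 \pmod 5$, exactly one of which occurs since $5 \nmid m$. In each case I would take $c$ to be the inverse of $m$ modulo $5$, namely $c = 1, 3, 2, 4$ respectively, so that $cm \equiv 1 \pmod 5$.

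The verification of idempotency is then uniform. For $a = cm$ we have $m \mid cm(cm-1)$ because $m \mid cm$, and $5 \mid cm-1$ by the choice of $c$, whence $5 \mid cm(cm-1)$; as $\gcd(5,m)=1$ this yields $5m \mid (cm)^2 - cm$, so $\overline{cm}$ is idempotent. This produces the first listed element in each part, namely $\overline{m}, \overline{3m}, \overline{2m}, \overline{4m}$. For the second element I would invoke the complementation observation from the introduction: $\overline{n+1-cm}$ is again idempotent, and $n+1-cm = (5-c)m+1$ matches $\overline{4m+1}, \overline{2m+1}, \overline{3m+1}, \overline{m+1}$ in the four cases.

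Finally, nontriviality follows from the bounds $2 \le cm \le 4m = n-m \le n-2$ (using $m \ge 2$ and $1 \le c \le 4$), which place both $\overline{cm}$ and its complement strictly between $\overline{1}$ and $\overline{n-1}$. I do not expect any genuine obstacle here; the only real work is the fourfold bookkeeping that matches each residue class to its inverse $c$ and confirms that $(5-c)m+1$ reproduces the stated formula. The mild subtlety worth flagging is simply recognizing that the disjunctive hypotheses collapse to a single residue condition, which is exactly what makes the one-line inverse-of-$m$ argument applicable in all four parts at once.
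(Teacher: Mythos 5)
Your proposal is correct and follows essentially the same route the paper intends: the paper omits an explicit proof, stating only that it is ``similar as of Theorem 2.1,'' and that argument is precisely yours --- pick the multiple $\overline{cm}$ with $cm\equiv 1\pmod 5$, note $m\mid cm(cm-1)$ and $5\mid cm-1$, conclude $5m\mid (cm)^2-cm$ since $(5,m)=1$, and obtain the second idempotent as the complement $\overline{n+1-cm}=\overline{(5-c)m+1}$. Your unifying observation that $c$ is the inverse of $m$ modulo $5$ is a clean way to package the fourfold case check, but it is the same computation.
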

	
	
	Proof and consequences of Theorem 2.2, Theorem 2.3 are similar as of Theorem 2.1.
	\begin{example}
		For $n=165=3\cdot 5\cdot 11$, using Theorem 2.2, out of $8$ idempotents of $\mathbb{Z}_{165}$, we can point out 4 idempotents\; $\overline{0}, \overline{1}, \overline{55}, \overline{111} $.
	\end{example}
	\begin{example}
		For $n=60=2^2\cdot 3\cdot 5$, using Theorem 2.3, out of $8$ idempotents of $\mathbb{Z}_{60}$, we can point out 4 idempotents\; $\overline{0}, \overline{1}, \overline{25}, \overline{36} $.
	\end{example}
	\begin{example}
		For $n=405=5\cdot 9^2$, using Theorem 2.3,  we have that $\{ \overline{0}, \overline{1}, \overline{81}, \overline{325} \} $ is the complete list of idempotents of $\mathbb{Z}_{405}$.
	\end{example}
	We started with the example of $\mathbb{Z}_{30}$ and observed that, using definition, it's not very easy to identify all idempotents of $\mathbb{Z}_{30}$. Previous three theorems help us to point out the complete list of idempotents of $\mathbb{Z}_{30}$,  which is given in the following example.
	\begin{example}
		Let $n=30=2\cdot 3\cdot 5=2 \cdot 15= 3 \cdot 10= 5 \cdot 6$, then for $m=15$, from Theorem 2.1, we obtain that $\overline{15}, \overline{16}$ are idempotents of $\mathbb{Z}_{30}$.\\
		For $m=10$, as $3 \mid 10-1$, from Theorem 2.2, we obtain that $\overline{10}, \overline{21}$ are idempotents of $\mathbb{Z}_{30}$.\\
		For $m=6$, as $5\mid 6-1$, from Theorem 2.3, we obtain that $\overline{6}, \overline{25}$ are idempotents of $\mathbb{Z}_{30}$.\\
		Including the trivial idempotents, we obtain that $ \{ \overline{0}, \overline{1}, \overline{6}, \overline{10}, \overline{15}, \overline{16}, \overline{21}, \overline{25} \} $ is  the complete list of idempotents of $\mathbb{Z}_{30}$.
	\end{example}
	Though these theorems are not sufficient to point out all the idempotents of $\mathbb{Z}_{n}$, for any $n$, these theorems give the preview of the more generalised results we are going to obtain.

	\section{The Main Result}

 In this part, we try to obtain the complete list of idempotents of $\mathbbm{Z}_n$, for any $n(\geq 2) \in \mathbbm{Z}$. We directly don't consider any $n$, rather we start with some special types of $n$ and gradually reach towards our desired result. Let $ n=p \cdot m $ with $p,m>1$ and $(p,m)=1$. Then from \cite{sib}, we obtain that, exacly one  nonzero multiple of $m$(or $p$) is an idempotent of $\mathbbm{Z}_{n}$. Further, there exists some $r$ with  $1\leq r \leq p-1$ such that $rm=qp+1$ for some $q  \in \mathbbm{N} $ and $rm$ is the only multiple of $m$ that is an idempotent of $\mathbbm{Z}_{n}$. So, if it's given that, $a$ is a nontrivial idempotent of $\mathbbm{Z}_{n}$ and $m \mid a$, then for some $q \in \mathbbm{N}$, $a$ is of the form $a=qp+1$. 
\begin{theorem}
    Let $n= p_1^{\alpha _1}p_2^{\alpha _2}\cdots p_k^{\alpha _k} \cdot m$, where $k \in \mathbbm{N}$, $m(\geq 2) \in \mathbbm{N}$ and for $1\leq i \leq k  $, $\alpha_i \in \mathbbm{N} $ and $p_i$'s are distinct primes such that $(p_i,m)=1$, and for $p=p_1^{\alpha _1}p_2^{\alpha _2}\cdots p_k^{\alpha _k}$, we have $m=ps+t$, where $s (\geq 0) \in \mathbbm{Z} $ and $1\leq t \leq p-1 $. Let $1\leq r \leq p-1$, then $\overline{r \cdot m+1}$ and $\overline{(p-r) \cdot m}$ are nontrivial idempotents of $\mathbbm{Z}_{n}$ if and only if $p \mid rt+1$. Such an $r$ is always unique. Further, if $\overline{r \cdot m+1}$ is a nontrivial idempotent of $\mathbbm{Z}_{pm}$and $j=p-t $, then $\overline{(p-r) \cdot m_1+1}$ and $\overline{r \cdot m_1} $ are nontrivial idempotents of $\mathbbm{Z}_{pm_1}$, where $m_1=ps+j$ and $s (\geq 0) \in \mathbbm{Z}$. 
\end{theorem}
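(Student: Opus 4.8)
The plan is to reduce the idempotency of each candidate to a single congruence modulo $p$ via the defining relation $n\mid a(a-1)$. For $a=rm+1$ we have $a-1=rm$, hence $a(a-1)=rm(rm+1)$; since $n=pm$ with $(p,m)=1$ and $m\mid rm(rm+1)$ automatically, this divisibility is equivalent to $p\mid r(rm+1)$. I would record two facts that drive the whole argument: first, $\gcd(r,rm+1)=1$, because any common divisor of $r$ and $rm+1$ divides $(rm+1)-m\cdot r=1$; second, $m\equiv t\pmod p$, so the condition reads $r(rt+1)\equiv 0\pmod p$.

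For the ``if'' direction, if $p\mid rt+1$ then $rm+1\equiv rt+1\equiv 0\pmod p$, so $p\mid r(rm+1)$ and $\overline{rm+1}$ is an idempotent; the estimate $3\le rm+1\le n-m+1\le n-1$ (using $r\ge 1$, $m\ge 2$) keeps it distinct from $\overline 0$ and $\overline 1$. The companion element needs no separate computation: since $\overline{(p-r)m}=\overline 1-\overline{rm+1}$, it is an idempotent exactly when $\overline{rm+1}$ is, and $m\le (p-r)m\le n-m$ guarantees nontriviality.

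The ``only if'' direction is the heart of the matter. Starting from $p\mid r(rm+1)$ with $\gcd(r,rm+1)=1$ and $1\le r\le p-1$, I must force $p\mid rm+1$, equivalently $p\mid rt+1$. This is exactly where the prime-power structure of $p$ enters: the single prime underlying $p$ cannot split between the two coprime factors $r$ and $rm+1$, so the whole of $p$ divides one of them, and $p\mid r$ is impossible in the range $1\le r\le p-1$. (Equivalently, one may apply the cited fact that a nontrivial idempotent divisible by $m$ is $\equiv 1\pmod p$ to the complement $\overline{(p-r)m}$, which gives $-rt\equiv 1\pmod p$ directly.) Uniqueness of $r$ is then clean: from $(m,p)=1$ one gets $(t,p)=1$, since any common divisor of $t$ and $p$ divides $m=ps+t$; hence $rt\equiv -1\pmod p$ has a unique solution in $\{1,\dots,p-1\}$. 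I expect this cancellation step to be the one requiring the most care, as it is precisely the point at which the hypotheses on $p$ are indispensable.

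For the final assertion I would apply the first part verbatim to the modulus $pm_1$, where $m_1=ps+j$ and $j=p-t$, so that $m_1\equiv -t\pmod p$. The unique index $r'$ yielding the idempotents $\overline{r'm_1+1}$ and $\overline{(p-r')m_1}$ now satisfies $p\mid r'j+1$, i.e.\ $r't\equiv 1\pmod p$. Comparing with the original relation $rt\equiv -1\pmod p$ gives $(r+r')t\equiv 0\pmod p$, and cancelling the unit $t$ forces $r'=p-r$. Substituting back identifies the pair as $\overline{(p-r)m_1+1}$ and $\overline{\bigl(p-(p-r)\bigr)m_1}=\overline{r m_1}$, with nontriviality following from the same range estimates as above.
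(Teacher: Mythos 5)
Your forward direction, your uniqueness argument, and your handling of the final assertion are all sound, and in two places cleaner than the paper's: the paper establishes uniqueness of $r$ by invoking \cite{sib} again, whereas you simply observe that $t$ is invertible modulo $p$; and the paper verifies $\overline{r\cdot m_1}$ by a direct divisibility computation, whereas you deduce it from the first part together with uniqueness. Those variations are fine.

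The genuine gap is in your primary argument for the ``only if'' direction. You reduce idempotency of $\overline{rm+1}$ to $p\mid r(rm+1)$ and then cancel $r$ on the grounds that ``the single prime underlying $p$ cannot split between the two coprime factors.'' But in this theorem $p=p_1^{\alpha_1}\cdots p_k^{\alpha_k}$ is a product of $k$ distinct prime powers; nothing in the hypotheses makes it primary. For composite $p$ the cancellation fails even with $\gcd(r,rm+1)=1$: take $n=30$, $p=6$, $m=5$ (so $t=5$) and $r=3$. Then $p\mid r(rm+1)=3\cdot 16=48$ and $\gcd(3,16)=1$, yet $6\nmid 3$ and $6\nmid 16$; correspondingly $\overline{16}$ and $\overline{15}$ really are nontrivial idempotents of $\mathbb{Z}_{30}$ while $6\nmid rt+1=16$. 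So for $k\ge 2$ the step you flag as ``the heart of the matter'' is not merely unjustified — the implication it aims at has counterexamples. The paper's converse does not argue this way: it imports from \cite{sib} the assertion that a nontrivial idempotent divisible by $m$ is $\equiv 1\pmod p$, applies it to $\overline{(p-r)m}$ to get $p\mid rm+1$, and then passes to $p\mid rt+1$ — which is exactly your parenthetical fallback. If you adopt that fallback your proof coincides with the paper's; but note that the same $\mathbb{Z}_{30}$ example shows the imported assertion (and hence the ``only if'' direction as stated) itself requires $p$ to be a prime power or some additional hypothesis, so the difficulty is being displaced onto the citation rather than resolved.
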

\begin{proof}
    With given conditions, let $1\leq r \leq p-1 $  and $p \mid r t+1$, then we prove that $\overline{r \cdot m+1}$, and $\overline{(p-r) \cdot m}$ are nontrivial idempotents of $\mathbbm{Z}_{n}$. 
    Let $\alpha = (rm+1)^2-(rm+1)=rm(rm+1)$, then $m \mid \alpha$.
    Now, given that $m = ps  +t$, i.e.,   $rm+1=prs+rt+1$.
      Since $p \mid rt+1$,  we have $p \mid rm+1$. So $p \mid \alpha $.
    Now, given that $(p_i,m)=1$, for $1\leq i \leq 
 k$ and  $p=p_1^{\alpha _1}p_2^{\alpha _2}\cdots p_k^{\alpha _k}$. So we have $(p,m)=1$.
 Hence, $n=pm \mid (rm+1)^2-(rm+1) $.
 Therefore, $\overline{r \cdot m+1}$ is an idempotent of $\mathbbm{Z}_{n}$ and another idempotent is given by $\overline{n+1-(r \cdot m+1)}= \overline{(p-r) \cdot m} $.
 As $1\leq r, p-r \leq p-1$;  $\overline{r \cdot m+1}$ and $ \overline{(p-r) \cdot m}$ are nontrivial idempotents of $\mathbbm{Z}_{n}$.

 Conversely, let $\overline{r \cdot m+1}$ and $ \overline{(p-r) \cdot m}$ be nontrivial idempotents of $\mathbbm{Z}_{n}$ where $1\leq r \leq p-1$, then we prove that $p \mid rt+1$.
 As $\overline{m \cdot (p-r)}$ (multiple of $m$) is a nontrivial idempotent of $\mathbbm{Z}_{n}$, we have  $m(p-r)=qp+1$, for some $q  \in \mathbbm{N} $. Now $rm+1+m(p-r)=n+1$, i.e., $rm+1+qp+1=n+1$. So $p \mid rm+1$ where $1\leq r \leq p-1 $. 
Let $rt+1 = pq+d$ where $q, d \in \mathbbm{Z}$ such that $0\leq d \leq p-1 $, if $1\leq d \leq p-1 $, then for $t=m-ps $, we have $r(m-pt)+1=pq+d$.
As $p \mid rm+1$, we have $p \mid d$, which is a contradiction.
Hence, we must have $d=0$ i.e., $p \mid r t+1$.

 Now we prove that $r$ is unique.
Let  $1\leq r_1 \leq p-1 $ and $p \mid tr_1+1$ such that $\overline{r_1 \cdot m+1}$ is a nontrivial idempotent of $\mathbbm{Z}_{n}$, then $\overline{(p-r_1)m}$ is also an idempotent and hence from \cite{sib}, we have $\overline{(p-r)m} = \overline{(p-r_1)m}$, which implies $r=r_1$. So, such an $r$ is unique.

Consider that $j=p-t$ and $m_1=ps+j$ where $s (\geq 0) \in \mathbbm{Z}$, then $t=p-j$ and $1\leq j \leq p-1$.
As $\overline{r \cdot m+1}$ is a nontrivial idempotent of $\mathbbm{Z}_{n}$, then by the first part of this theorem, we have $p\mid rt +1$. So, $p\mid r(p-j) +1$, i.e., $p\mid rj-1$. 
Now $rm_1-1=r(ps+j)-1$, so $p \mid rm_1-1$, i.e., for some $u \in \mathbbm{Z}^+$ we have $rm_1-1=pu$.
Hence, $(rm_1)^2-rm_1=rm_1(rm_1-1)=pm_1ru $, implies $pm_1 \mid (rm_1)^2-rm_1$, where $1\leq r \leq p-1$.
So, $\overline{r \cdot m_1}$ is a nontrivial idempotent of $\mathbbm{Z}_{p m_1}$ and the other idempotent is given by
\begin{align*}
 \overline{p \cdot m_1+1-r\cdot m_1}=\overline{(p-r)\cdot m_1+1}.   
\end{align*}
As $1\leq r \leq p-1$, then $\overline{(p-r) \cdot m_1+1}, \overline{r \cdot m_1}$ are nontrivial idempotents of $\mathbbm{Z}_{p m_1}$, where $m_1=ps+j$ and $s (\geq 0) \in \mathbbm{Z}$.     
\end{proof}
In Example 2.4, if we take $n=5^2 \cdot 9^2=2025$ instead of $n=5 \cdot 9^2=405$,  then Theorem 2.3 is not sufficient to point out the idempotents of $\mathbbm{Z}_{2025}$. However, using Theorem 3.1, if we take $p=5^2$ and $m=9^2=5^2\cdot2+31$, then we can obtain the complete list of idempotents of $\mathbbm{Z}_{2025}$. In this case, from Theorem 3.1 we obtain that $t=31$, so we find $r$ such that $1\leq r \leq 24$ and $25 \mid 31r+1$. Here we have $r=4$. Hence, we have that $\{ \overline{0}, \overline{1}, \overline{325}, \overline{1701} \} $ is the complete set of idempotents of $\mathbbm{Z}_{2025}$. Here we can also take $p=9^2$ and $m=5^2$ and proceed in the similar way as before. For different examples, we observe that, Theorem 3.1 is not sufficient to point out all the idempotents of $\mathbbm{Z}_n$ for any $n(\geq 6) \in \mathbbm{Z}$. However, first part of Theorem 3.1 is heavily used later, to obtain more results.\\
In the next theorem, we discuss about implication of the second part of Theorem 3.1.
\begin{theorem}
    Let $p=p_1^{\alpha _1}p_2^{\alpha _2}\cdots p_k^{\alpha _k}$ where $k \in \mathbbm{N}$ and for $1\leq i \leq k  $, $\alpha_i \in \mathbbm{N} $, $p_i$'s are distinct primes. For $1\leq t \leq p-1$ and $s (\geq 0) \in \mathbbm{Z}$, suppose $r$ is unique such that $1\leq r \leq p-1$ and $p \mid rt+1$. If $(p,ps+t)=1$, then  $\overline{r\cdot(ps+t)+1}$ and  $\overline{(p-r) \cdot (ps+t)}$ are nontrivial idempotents of $\mathbbm{Z}_{p (ps+t)}$ if and only if $\overline{(p-r)\cdot(ps+p-t)+1}, \overline{r\cdot (ps+p-t)}$ are nontrivial idempotents of $\mathbbm{Z}_{p (ps+p-t)}$.
\end{theorem}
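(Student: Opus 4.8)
The plan is to read Theorem 3.2 as a symmetric reformulation of the second part of Theorem 3.1 and to obtain both directions of the equivalence by invoking that second part under the involution $t \mapsto p-t$, $r \mapsto p-r$. Write $m = ps+t$ and $m_1 = ps+(p-t)$. First I would record two elementary observations. Since $p \mid rt+1$, no prime dividing $p$ can divide $t$, so $(p,t)=1$; and because $ps+t \equiv t$ while $ps+(p-t) \equiv -t \pmod{p_i^{\alpha_i}}$ for each $i$, the hypothesis $(p,\,ps+t)=1$ is equivalent to $(p,t)=1$ and hence also to $(p,\,ps+(p-t))=1$. Thus the coprimality needed on the $m_1$-side comes for free, and both moduli $pm$ and $pm_1$ satisfy the hypotheses of Theorem 3.1.

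The forward implication is immediate, being exactly the second part of Theorem 3.1 with $j=p-t$. Assuming $\overline{r\cdot m+1}$ (and hence $\overline{(p-r)\cdot m}$) is a nontrivial idempotent of $\mathbbm{Z}_{pm}$, that result yields that $\overline{(p-r)\cdot m_1+1}$ and $\overline{r\cdot m_1}$ are nontrivial idempotents of $\mathbbm{Z}_{pm_1}$, which is precisely the right-hand side of the statement.

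For the converse I would apply the very same second part of Theorem 3.1, but to the mirrored data in which $m_1 = ps+(p-t)$ plays the role of the base modulus, its residue being $t_1 := p-t$ and its associated multiplier being $r_1 := p-r$. The key verification is that $r_1=p-r$ is indeed the unique index in $\{1,\dots,p-1\}$ with $p \mid r_1 t_1 + 1$; this follows from the congruence $(p-r)(p-t)+1 \equiv rt+1 \equiv 0 \pmod{p}$ together with the uniqueness clause of Theorem 3.1. Granting this, and using the right-hand side hypothesis that $\overline{(p-r)\cdot m_1+1}=\overline{r_1\cdot m_1+1}$ is a nontrivial idempotent of $\mathbbm{Z}_{pm_1}$, applying the second part of Theorem 3.1 to $m_1$ with $j_1 = p-t_1 = t$ returns the modulus $ps+t = m$ and produces $\overline{(p-r_1)\cdot m+1}=\overline{r\cdot m+1}$ together with $\overline{r_1\cdot m}=\overline{(p-r)\cdot m}$ as nontrivial idempotents of $\mathbbm{Z}_{pm}$, which is the left-hand side.

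The main, and essentially only, obstacle is the bookkeeping of the involution: confirming that passing through $t \mapsto p-t$ sends the unique multiplier $r$ to $p-r$, so that the hypotheses of Theorem 3.1 transport correctly in both directions. Once the congruence $(p-r)(p-t)+1 \equiv rt+1 \pmod{p}$ and the uniqueness of the multiplier are in hand, each implication reduces to a single application of an already-proved result, and no further computation is required.
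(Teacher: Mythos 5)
Your proof is correct, and the forward direction is exactly the paper's: a single application of the second part of Theorem 3.1 with $j=p-t$. Your converse takes a slightly different (and arguably more symmetric) route. The paper's converse extracts the congruence $p\mid rm_2-1$ from the fact (cited from the reference) that the unique idempotent multiple of $m_2$ in $\mathbbm{Z}_{pm_2}$ is $\equiv 1 \pmod p$, deduces $p\mid rt+1$, and then invokes the \emph{first} part of Theorem 3.1 to produce the left-hand side. You instead observe that $t\mapsto p-t$, $r\mapsto p-r$ is an involution compatible with the congruence $(p-r)(p-t)+1\equiv rt+1\pmod p$, and re-apply the \emph{second} part of Theorem 3.1 to the mirrored data $(m_1,t_1,r_1)=(ps+p-t,\,p-t,\,p-r)$ with $j_1=t$, landing back on $m=ps+t$. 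Both converses are one-step reductions to Theorem 3.1; yours avoids any appeal to the external characterization of idempotent multiples of $m_2$, at the cost of the bookkeeping check that the hypotheses of the second part transport under the involution --- a check you carry out correctly, including the observation that $(p,ps+t)=1$ is equivalent to $(p,t)=1$ and hence to $(p,ps+p-t)=1$, so the coprimality hypothesis holds on both sides.
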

\begin{proof}
Consider that  $\overline{r\cdot(ps+t)+1}, \overline{(p-r) \cdot (ps+t)}$ are nontrivial idempotents of $\mathbbm{Z}_{p (ps+t)}$.\\
    Let $m=ps+t$, then by hypothesis $(p,m)=1$. For $j=p-t$, let $m_1=ps+j=ps+p-t$, then from the second part of Theorem 3.1, we obtain that $\overline{(p-r)\cdot(ps+p-t)+1}, \overline{r\cdot (ps+p-t)}$ are nontrivial idempotents of $\mathbbm{Z}_{p (ps+p-t)}$.

    Conversely, let $\overline{(p-r)\cdot(ps+p-t)+1}$ and  $\overline{r\cdot (ps+p-t)}$ be nontrivial idempotents of $\mathbbm{Z}_{p (ps+p-t)}$.
    As $(p,ps+t)=1$, we have $(p,ps+p-t)=1$. Let $m_2=ps+p-t$, then $\overline{(p-r)\cdot m_2+1}$ and  $\overline{r\cdot m_2}$ are nontrivial idempotents of $\mathbbm{Z}_{p\cdot m_2}$. As $(p,m_2)=1$, from \cite{sib} we have $rm_2=qp+1$ for some $ q \in \mathbbm{N} $. Now $ (p-r)\cdot m_2+1+ r\cdot m_2 =(p-r)\cdot m_2+1+qp+1 =p\cdot m_2+1 $, i.e., $p \mid 1-rm_2$, which implies $p \mid rt+1$. From Theorem 3.1, it follows that $\overline{r\cdot(ps+t)+1}, \overline{(p-r) \cdot (ps+t)}$ are nontrivial idempotents of $\mathbbm{Z}_{p (ps+t)}$.
\end{proof}
We can explicitly point out some idempotents of $\mathbbm{Z}_{p (ps+p-t)}$ using Theorem 3.1. However, if we know some idempotents of $\mathbbm{Z}_{p (ps+t)}$, then we can point out some more idempotents of $\mathbbm{Z}_{p (ps+p-t)}$, directly from the idempotents of $\mathbbm{Z}_{p (ps+t)}$, without any further calculation. Converse is also true also. These are illustrated in the following example.
\begin{example}
For $n=17 \cdot 40=680$,  taking $p=17$ and $m=40=17\cdot 2+6$ in Theorem 3.1, we get that $\overline{14 \cdot 40+1}=\overline{561}$ and $\overline{(17-14)\cdot 40}=\overline{120}$ are idempotents of $\mathbbm{Z}_{680}$ and we have $r=14$.
Now, using Theorem 3.1, we can explicitly obtain some idempotents of $\mathbbm{Z}_{765}$, however , as $765=17\cdot45=17\cdot \{ 17\cdot 2 +(17-6) \}$, from Theorem 3.2 we get that $\overline{(17-14)\cdot45+1}=\overline{136}$, and $ \overline{14\cdot 45}=\overline{630}$ are  nontrivial idempotents of $\mathbbm{Z}_{765}$.
Similarly, at first if we calculate the idempotent of $\mathbbm{Z}_{765}$ using Theorem 3.1, then using Theorem 3.2 we can obtain some idempotents of $\mathbbm{Z}_{680}$, without using Theorem 3.1.
\end{example}
In Theorem 3.1, we observed that if $\overline{r \cdot m+1}$ is a nontrivial idempotent of $\mathbbm{Z}_{n}$, then the bound for $r$ is given by $1\leq r \leq p-1 $. To calculate $r$, we used the trial and error method on the condition $p\mid rt+1$ for $p-1$ values of $r$. Next we discuss about some condition to make the calculation of $r$ easier and we point out some additional properties of $r$.
\begin{theorem}
    Let $n= p_1^{\alpha _1}p_2^{\alpha _2}\cdots p_k^{\alpha _k} \cdot m$, where $k \in \mathbbm{N}$, $m(\geq 2) \in \mathbbm{N}$ and  $\alpha_i \in \mathbbm{N} $ for $1\leq i \leq k  $, $p_i$'s are distinct primes such that $(p_i,m)=1$ and for $p=p_1^{\alpha _1}p_2^{\alpha _2}\cdots p_k^{\alpha _k}$, we have  $m=ps+t$ where $s (\geq 0) \in \mathbbm{Z} $ and $1\leq t\leq p-1 $. For $1\leq r \leq p-1$ if  $\overline{r \cdot m+1}, \overline{(p-r) \cdot m}$ are nontrivial idempotents of $\mathbbm{Z}_{n}$, then $1\leq \frac{rt+1}{p} \leq t$ and $(p,r)=1$, $(p,rm)=1$.
\end{theorem}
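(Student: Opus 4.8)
The plan is to take the characterization from Theorem 3.1 as the starting point. Since $\overline{r\cdot m+1}$ and $\overline{(p-r)\cdot m}$ are assumed to be nontrivial idempotents of $\mathbb{Z}_{n}$, the ``only if'' direction of Theorem 3.1 immediately yields $p \mid rt+1$. Writing $q=\frac{rt+1}{p}$, this $q$ is a positive integer, and the whole statement reduces to (i) bounding $q$ between $1$ and $t$, and (ii) deducing the two coprimality claims from $p\mid rt+1$ together with the standing hypothesis $(p_i,m)=1$.

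First I would establish the inequality $1\le \frac{rt+1}{p}\le t$. The lower bound is immediate: since $r\ge 1$ and $t\ge 1$ we have $rt+1\ge 2>0$, and as $p$ divides the positive integer $rt+1$, the quotient $q$ is at least $1$. For the upper bound I would use the hypothesis $r\le p-1$ to write $rt+1\le (p-1)t+1 = pt-(t-1)$; because $t\ge 1$ the subtracted quantity $t-1$ is nonnegative, so $rt+1\le pt$ and hence $q=\frac{rt+1}{p}\le t$. This settles the first assertion with only elementary estimates.

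Next I would prove $(p,r)=1$ by a short divisibility argument. Suppose some prime factor $p_i$ of $p$ divides both $p$ and $r$. Then $p_i\mid rt$, while $p\mid rt+1$ forces $p_i\mid rt+1$; subtracting gives $p_i\mid 1$, a contradiction. Hence no prime factor of $p$ divides $r$, i.e. $(p,r)=1$. Finally, since $(p_i,m)=1$ for every $i$ we have $(p,m)=1$, and combining this with $(p,r)=1$ yields $(p,rm)=1$, which completes the proof.

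The argument is essentially routine once Theorem 3.1 is invoked; the only place demanding a little care is the upper bound $q\le t$, where one must use the sharp bound $r\le p-1$ rather than a weaker estimate, and observe that the correction term $t-1\ge 0$ is exactly what makes the inequality go through.
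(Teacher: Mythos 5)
Your proposal is correct and follows essentially the same route as the paper: invoke Theorem 3.1 to get $p \mid rt+1$, bound the quotient using $1 \le r \le p-1$ and $t \ge 1$ (the paper writes $rt+1 \le t(p-1)+1 = pt + (1-t) \le pt$, which is your estimate in a slightly different arrangement), and derive $(p,r)=1$ from the fact that any common divisor of $p$ and $r$ would divide $1$. No substantive differences.
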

\begin{proof}
    Let $1\leq r \leq p-1$ and $\overline{r \cdot m+1}, \overline{(p-r) \cdot m}$ be nontrivial idempotents of $\mathbbm{Z}_{n}$. Then we have  $t+1 \leq rt+1 \leq t(p-1)+1$. As  $\overline{r \cdot m+1}, \overline{(p-r) \cdot m}$ are nontrivial idempotents of $\mathbbm{Z}_{n}$, from Theorem 3.1, we get that $rt+1=pu$, for some $u\in \mathbbm{Z}^+$. 
    So, $(t+1)/p \leq u \leq \{ t(p-1)+1 \}/p=t+ \{ (1-t)/p \} \leq t$, as  $1\leq t \leq p-1 $.
    Since $(t+1)/p \leq 1$ and $u\in \mathbbm{Z}^+$, we have $1\leq u \leq t$, i.e., $1\leq \frac{rt+1}{p} \leq t$. 
    If $(p,r)=d$, then as $p\mid rt+1$, it follows that $d=1$.
    Also $(p,m)=1$, hence $(p,rm)=1$. 
\end{proof}
Theorem 3.3 provides us a bound for such $u$, discussed in the proof. This $u$ actually makes the calculation of $r$ easier than before. From the fact that $rt+1=pu $, $(p,r)=1$ and $1\leq u \leq t$, now to calculate $r$, we use trial and error method on $r=\frac{pu-1}{t}$, for $t$ number of values of $u$.

For $p=17 $ and $m=60=17 \cdot 3+9$, we have $t=9$ in this case. To calculate the idempotents of $\mathbbm{Z}_{1020}$ of the forms $\overline{r \cdot 60+1}, \overline{(17-r) \cdot 60}$, we first need to calculate $r$ such that $1 \leq r \leq 16$ and $ 17 \mid 9r+1$ (using Theorem 3.1). If we proceed to calculate $r$ using Theorem 3.1, then there are 16 possibilities of $r$. However,  using Theorem 3.3, we observe that 16 possibilities of values of $r$ reduce to $9$ possibilities of values of $u$
 (i.e., to obtain $r$, we need to check only $t$ possible values of $u$ on $rt+1=pu$) and eventually $u=8$, i.e., $r=15$.


Next we define some new sets that contain idempotents of $\mathbbm{Z}_{n}$ and discuss some properties of those sets.

	\section{Some Further Investigation}\label{Further Investigation}
	Let $n= p_1^{\alpha _1}p_2^{\alpha _2}\cdots p_k^{\alpha _k} $, where $k ( \geq 2 )\in \mathbbm{N}$ and for $1\leq i \leq k  $, $\alpha_i \in \mathbbm{N} $ and $p_i$'s are distinct primes. For each $1 \leq j \leq k$, if we write  $n=p_j^{\alpha_j} \cdot m$, where $m={\displaystyle \prod_{i=1, i \neq j}^{k} p_{i}^{\alpha_i}}$, then using Theorem 3.1, we obtain 2 nontrivial idempotents of $\mathbbm{Z}_n$. Now we define some sets, whose elements are idempotents of $\mathbbm{Z}_n$, obtained using Theorem 3.1.\\
 
 Let $1_+ = \{\text{idempotents of }\mathbbm{Z}_n, \text{obtained using Theorem 3.1} : n=p_{j_1}^{\alpha_{{j_1}}} \cdot m, \text{ for some } j_1, 1 \leq j_1\leq k, \alpha_{j_1}\in\mathbbm{N}, m={\displaystyle \prod_{i=1, i \neq j_1}^{k} p_{i}^{\alpha_i}} \}$.\\
  Observe that there are $ {k \choose 1}$ such representations of $n$ and for each such representation we obtain 2 nontrivial idempotent of $\mathbbm{Z}_n$.
 So, $  |1_+| \leq 2 {k \choose 1}$.  Similarly, let \\
 $ 2_+ =$ $\{\text{idempotents of }\mathbbm{Z}_n, \text{obtained using Theorem 3.1} : n=p_{j_1}^{\alpha_{{j_1}}}  p_{j_2}^{\alpha_{{j_2}}}  m, \text{ for some } j_1,j_2, 1 \leq j_1 \neq j_2\leq k, m={\displaystyle \prod_{i=1, i \neq j_1, j_2}^{k} p_{i}^{\alpha_i}} \}$.\\
  Then by the similar argument, $ |2_+| \leq 2 {k \choose 2}$. \\
 In general, for every $h$ with $1 \leq h \leq k-1$, if\\
  $  h_+ =$ $\{\text{idempotents of }\mathbbm{Z}_n, \text{ we obtain using Theorem 3.1} : n=p_{j_1}^{\alpha_{{j_1}}} p_{j_2}^{\alpha_{{j_2}}} \cdots p_{j_h}^{\alpha_{{j_h}}} m, \text{ for some }\\
 j_1,j_2, \cdots, j_h, 1 \leq j_1 \neq j_2 \neq \cdots \neq j_h\leq k, m={\displaystyle \prod_{i=1, i \neq j_1,\cdots, j_h}^{k} p_{i}^{\alpha_i}} \}$.\\
 
Then $  |h_+| \leq 2 {{k \choose h}}$. Next we discuss some results, that help us to prove that $\  |h_+| = 2 {k \choose h}$.
 \begin{lemma}
      Let $n= p_1^{\alpha _1}p_2^{\alpha _2}\cdots p_k^{\alpha _k}$, where $k(\geq 2) \in \mathbbm{N}$ and for $1\leq i \leq k  $, $\alpha_i \in \mathbbm{N} $, $p_i$'s are distinct primes.  Let $n=p m={p'}  {m'}$, where $p=p_{j_1}^{\alpha_{{j_1}}}  p_{j_2}^{\alpha_{{j_2}}} \cdots p_{j_h}^{\alpha_{{j_h}}} $, $m=p_{j_{h+1}}^{\alpha_{{j_{h+1}}}}  p_{j_{h+2}}^{\alpha_{{j_{h+2}}}} \cdots p_{j_k}^{\alpha_{{j_k}}} $, ${p'}=m$, ${m'}=p$, for $\alpha_{j_i} \in \mathbbm{N}$, $1\leq j_i \leq k$, $\forall i=1,2,\cdots ,k$ and $1\leq j_h \leq k-1$. Then the two idempotents, obtained for each of the  different representations $n=p m$ and $n={p'}{m'}$, using Theorem 3.1, are same.
 \end{lemma}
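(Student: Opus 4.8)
The plan is to reduce everything to the uniqueness statement quoted from \cite{sib} at the opening of Section~3: for $n=pm$ with $(p,m)=1$, there is \emph{exactly one} nonzero multiple of $m$ that is idempotent in $\mathbbm{Z}_n$, and likewise exactly one nonzero multiple of $p$. The crucial observation is that each of the two idempotents delivered by Theorem~3.1 is in fact a multiple of one of the two factors. Indeed $\overline{(p-r)\cdot m}$ is visibly a multiple of $m$, while $\overline{r\cdot m+1}$ is a multiple of $p$, since the defining condition $p\mid rt+1$ together with $m=ps+t$ gives $p\mid rm+1$. So the pair produced by the representation $n=pm$ is precisely ``the unique idempotent multiple of $m$'' paired with ``the unique idempotent multiple of $p$''.

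First I would record the bounds that make these genuine nonzero residues of $\mathbbm{Z}_n$: from $1\leq r\leq p-1$ and $m\geq 2$ one gets $0<rm+1\leq(p-1)m+1=n-m+1<n$ and $0<(p-r)m\leq(p-1)m<n$, so both elements are nonzero, one a multiple of $p$ and the other a multiple of $m$, and \cite{sib} genuinely applies to each.

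Next I would run Theorem~3.1 on the swapped representation $n=p'm'$ with $p'=m$ and $m'=p$. It produces a unique $r'$ with $1\leq r'\leq m-1$ and the idempotents $\overline{(p'-r')\cdot m'}=\overline{(m-r')\cdot p}$ and $\overline{r'\cdot m'+1}=\overline{r'\cdot p+1}$. By the same reasoning as above, now with the roles of $p$ and $m$ interchanged, $\overline{(m-r')\cdot p}$ is a nonzero multiple of $p$, while the condition $p'\mid r't'+1$ forces $m\mid r'p+1$, so $\overline{r'\cdot p+1}$ is a nonzero multiple of $m$; the same elementary bounds confirm both lie strictly between $0$ and $n$.

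Finally I would match the two lists purely by divisibility. Both $\overline{r\cdot m+1}$ and $\overline{(m-r')\cdot p}$ are nonzero idempotent multiples of $p$, hence equal by the uniqueness of \cite{sib}; both $\overline{(p-r)\cdot m}$ and $\overline{r'\cdot p+1}$ are nonzero idempotent multiples of $m$, hence equal for the same reason. Therefore the two representations $n=pm$ and $n=p'm'$ yield the same pair of idempotents, as claimed. The only real care required — and the step most likely to derail the bookkeeping rather than the mathematics — is keeping the notational swap $(p,m,r,t)\leftrightarrow(m,p,r',t')$ straight and correctly tracking, in each representation, which of the two idempotents is the multiple of $p$ and which is the multiple of $m$; once that is pinned down, the entire content is carried by the single uniqueness lemma of \cite{sib}.
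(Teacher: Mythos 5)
Your proof is correct, but it takes a different route from the paper's. The paper proves $A=D$ (where $A=\overline{r\cdot m+1}$ and $D=\overline{(p'-r')\cdot m'}$) by a direct divisibility computation: it sets $\alpha=(rm+1)-(p'-r')m'$ and verifies $p\mid\alpha$ (using $p\mid rm+1$ and $p=m'$) and $m\mid\alpha$ (using $p'=m\mid r'm'+1$), whence $n=pm\mid\alpha$ since $(p,m)=1$; the identity $B=C$ then follows symmetrically. You instead classify each of the four elements as the unique nonzero idempotent multiple of $p$ or of $m$ and invoke the uniqueness statement quoted from \cite{sib} at the start of Section~3 to force $A=D$ and $B=C$. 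Both arguments are sound, and they rest on the same underlying fact (an idempotent of $\mathbbm{Z}_{pm}$ is pinned down by its residues modulo $p$ and modulo $m$); the difference is that you outsource that fact to \cite{sib}, which makes the lemma an essentially one-line corollary and makes transparent \emph{why} the two representations must give the same pair, while the paper's computation is self-contained modulo only the divisibility $p\mid rm+1$ supplied by Theorem~3.1. Your attention to the bounds $0<rm+1<n$ and $0<(p-r)m<n$ (and their primed analogues) is a worthwhile addition, since the uniqueness statement concerns \emph{nonzero} multiples and the paper does not re-verify nonvanishing at this point.
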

 \begin{proof}
     Let $\overline{r \cdot m+1}, \overline{(p-r) \cdot m}$ be the two nontrivial idempotents obtained using Theorem 3.1 for the representation $n=p m$, where $m=ps+t$, $s (\geq 0) \in \mathbbm{Z} $, $1\leq t \leq p-1 $ and $1\leq r \leq p-1$ with $p \mid rt+1$, and let $\overline{{r'} \cdot {m'}+1}, \overline{({p'}-{r'}) \cdot {m'}}$ be the two idempotents obtained using Theorem 3.1 for the representation $n={p'} {m'}$, where ${m'}={p'}{s'}+{t'}$, ${s'} (\geq 0) \in \mathbbm{Z} $, $1\leq {t'} \leq {p'}-1 $ and $1\leq {r'} \leq {p'}-1$ with ${p'} \mid {r'}{t'}+1$.
     Now given that $n=p\cdot m={p'} \cdot {m'}$ and ${p'}=m$, ${m'}=p$. Then, to obtain the result it's enough to prove that $ \{ A,B \} = \{ C,D \} $, where            $ A =\overline{r \cdot m+1} $, $B=\overline{(p-r) \cdot m}$, $C=\overline{{r'} \cdot {m'}+1}$, $D=\overline{({p'}-{r'}) \cdot {m'}}$.
     We claim $A =D$.
     Assume that $rm+1-{p'}{m'}+{r'}{m'}=\alpha$. As $\overline{r \cdot m+1}$ is an idempotent of $\mathbbm{Z}_{n(=pm)}$, we have $p \mid rm+1$, so $p \mid \alpha$. Similarly, as $\overline{{r'} \cdot {m'}+1}$ is an idempotent of $\mathbbm{Z}_{n=(p'm')}$, we have ${p'}=m \mid \alpha$ with $(p,m)=1$.
     Therefore, $n\mid \alpha=(rm+1)-({p'}-{r'})m'$.
     Hence $\overline{r \cdot m+1}=\overline{({p'}-{r'}) \cdot {m'}}$, i.e., $A =D$.
     Similarly we can prove that $\overline{{r'} \cdot {m'}+1}=\overline{(p-r) \cdot m}$, i.e., $B=C$.
 \end{proof}
 Whether $A$ and $B$ are distinct or not, that will be discussed later but from Lemma 4.1, we observe that, $ \{ A,B \} \subseteq {h_+} $ and $ \{ C,D \} \subseteq {(k-h)_+} $, where $1 \leq h \leq k-1$. So Lemma 4.1 gives us some information about elements of $i_+$ and $j_+$, where $i+j=k$ and $1\leq i \neq j \leq k-1$. 
 \begin{lemma}
     Let $n= p_1^{\alpha _1}p_2^{\alpha _2}\cdots p_k^{\alpha _k}\cdot m $, where $k \in \mathbbm{N}$, $1\leq i \leq k  $, $\alpha_i \in \mathbbm{N} $, $p_i$'s are distinct primes and $(p_i,m)=1$. If $p=p_1^{\alpha _1}p_2^{\alpha _2}\cdots p_k^{\alpha _k}$, then there exists $r$ with $1\leq r \leq p-1$ such that  $\overline{r \cdot m+1}$ and $\overline{(p-r) \cdot m}$ are distinct nontrivial idempotents of $\mathbbm{Z}_{n}$. 
 \end{lemma}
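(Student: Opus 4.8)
The plan is to split the statement into two halves: the \emph{existence} of an $r$ that produces idempotents of the prescribed shape, which is essentially already contained in Theorem 3.1, and the \emph{distinctness} of the two resulting idempotents, which is the only genuinely new assertion. Accordingly I would first secure such an $r$ and then concentrate all effort on proving $\overline{r\cdot m+1}\neq\overline{(p-r)\cdot m}$.

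For existence, write $m=ps+t$ with $1\leq t\leq p-1$ as in Theorem 3.1. Since $(p_i,m)=1$ for every $i$ and $p=\prod_{i} p_i^{\alpha_i}$, we have $(p,m)=1$, and because $t\equiv m \pmod p$ this forces $(p,t)=1$. Hence $t$ is a unit modulo $p$, so the congruence $rt\equiv -1\pmod p$ is solvable, and its solution is nonzero modulo $p$ and may therefore be taken with $1\leq r\leq p-1$. With this $r$ we have $p\mid rt+1$, and Theorem 3.1 immediately yields that $\overline{r\cdot m+1}$ and $\overline{(p-r)\cdot m}$ are nontrivial idempotents of $\mathbbm{Z}_n$.

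The crux is distinctness. I would first record the complementarity relation $(rm+1)+(p-r)m = pm+1 \equiv 1 \pmod n$, so that, setting $e=\overline{r\cdot m+1}$, the second idempotent equals $\overline{1}-e$. Thus the two idempotents coincide precisely when $e=\overline{1}-e$, i.e.\ when $\overline{2e}=\overline{1}$, i.e.\ when $pm\mid 2(rm+1)-1 = 2rm+1$. This is exactly where the hypothesis $m\geq 2$ enters: reducing $pm\mid 2rm+1$ modulo $m$ gives $m\mid 2rm+1$, and since $m\mid 2rm$ we conclude $m\mid 1$, contradicting $m\geq 2$. Hence $e\neq \overline{1}-e$, and the two idempotents are distinct.

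I expect the divisibility reduction in the previous paragraph to be the only real obstacle, and it is short; the existence half is routine once the coprimality $(p,t)=1$ is noted. The subtlety worth flagging is that the conclusion genuinely fails when $m=1$ (there the construction collapses to the \emph{trivial} idempotents $\overline{0},\overline{1}$), so the hypothesis $m\geq 2$ carried over from Theorem 3.1 is indispensable rather than cosmetic.
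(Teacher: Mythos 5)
Your proposal is correct, and the existence half coincides with the paper's: the paper extracts $r$ from the B\'ezout coefficients of $px+my=1$ (so that $p\mid rm+1$), while you invert $t\equiv m\pmod p$ directly; these are the same computation dressed differently, and both then invoke Theorem 3.1 via $p\mid rt+1$. Where you genuinely diverge is on distinctness. The paper's proof of this lemma simply asserts that the two idempotents are distinct ``from Theorem 3.1,'' but Theorem 3.1 nowhere claims distinctness; the paper only supplies an actual argument later, inside the proof of Theorem 4.1, where $\overline{r\cdot m+1}=\overline{(p-r)\cdot m}$ is refuted by reducing modulo $p$: it would force $p\mid rm$, hence $p\mid r$ with $1\leq r\leq p-1$, a contradiction. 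Your argument instead observes that the second idempotent is $\overline{1}-e$ with $e=\overline{r\cdot m+1}$, so equality would mean $n\mid 2rm+1$, and reducing modulo $m$ gives $m\mid 1$, contradicting $m\geq 2$. Both are one-line reductions of comparable difficulty; the advantage of yours is that it is self-contained at the point where the lemma is stated, which is where the paper's own write-up leaves a small gap. Your remark that the hypothesis $m\geq 2$ (present in Theorem 3.1 but omitted from the lemma's statement) is genuinely needed --- since $m=1$ collapses the construction to the trivial idempotents $\overline{0},\overline{1}$ of $\mathbb{Z}_p$ --- is a worthwhile observation that the paper does not make.
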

 \begin{proof}
     Let $(x_1,-y_1)$ be a solution or Bezout's coefficient of $px+my=1$, where $x_1,y_1$ are positive integers. Then $p\mid my_1+1$.
     Let $r=y_1$, then trivially $r\geq 1$ and $p\mid rm+1$ and   $r=y_1={|-y_1|}\leq |\frac{p}{(p,m)}|=p$. If $r=y_1=p$, then $p=1$, which is not possible.
     Hence $1\leq r \leq p-1$.
     As $(p,m)=1$ and $p\mid rm+1$, for $m=ps+t$ we have $p\mid rt+1$, where $1\leq t \leq p-1$ and $s (\geq 0)\in \mathbbm{Z}$. 
     From Theorem 3.1, it follows that $\overline{r \cdot m+1}$ and $\overline{(p-r) \cdot m}$ are distinct nontrivial idempotents of $\mathbbm{Z}_{n}$, where $1\leq r \leq p-1$.
 \end{proof}
 For given $n(\geq 6) \in \mathbbm{Z}$, Lemma 4.2 ensures that there exists some $r$ with $1\leq r \leq p-1$ such that $\overline{r \cdot m+1}$ and $\overline{(p-r) \cdot m}$ are distinct nontrivial idempotents of $\mathbbm{Z}_{n}$. 
 \begin{theorem}
    Let $n= p_1^{\alpha _1}p_2^{\alpha _2}\cdots p_k^{\alpha _k}$, where $k(\geq 2) \in \mathbbm{N}$, $1\leq i \leq k  $, $\alpha_i \in \mathbbm{N} $, $p_i$'s are distinct primes, then for $1 \leq h \leq k-1$, the following hold. 
    \begin{enumerate}[label=(\alph*)]
        \item $| h_+|=2 {k \choose h}$,\text{ if}  $h\neq k/2$,\\ 
        \item $| h_+|= {k \choose h}$,\text{ if}  $h=k/2$.
    \end{enumerate}
 \end{theorem}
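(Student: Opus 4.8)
The plan is to label every idempotent of $\mathbbm{Z}_n$ by a subset of $\{1,\dots,k\}$ according to its residues modulo the prime powers $p_i^{\alpha_i}$, to show that the representation indexed by a size-$h$ set $S$ yields exactly the two idempotents attached to $S$ and to its complement, and then to finish by a short count.

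First I would set up the labelling. Since each $p_i^{\alpha_i}$ is a prime power, $\mathbbm{Z}_{p_i^{\alpha_i}}$ has only the idempotents $\overline{0},\overline{1}$, so reducing an idempotent $\overline{a}$ of $\mathbbm{Z}_n$ modulo each $p_i^{\alpha_i}$ produces a tuple whose entries lie in $\{0,1\}$. For $T\subseteq\{1,\dots,k\}$ let $e_T$ denote the idempotent with $e_T\equiv 1\pmod{p_i^{\alpha_i}}$ for $i\in T$ and $e_T\equiv 0\pmod{p_i^{\alpha_i}}$ for $i\notin T$; distinct subsets give distinct residue patterns and hence distinct elements of $\mathbbm{Z}_n$, so $T\mapsto e_T$ is a bijection onto the $2^k$ idempotents.

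Next I would compute which labels the two idempotents coming from a representation carry. Fix $S$ with $|S|=h$, put $p=\prod_{i\in S}p_i^{\alpha_i}$ and $m=\prod_{i\notin S}p_i^{\alpha_i}$, and let $r$ be as in Theorem 3.1, so the two idempotents are $A_S=\overline{rm+1}$ and $B_S=\overline{(p-r)m}$. For $i\notin S$ we have $p_i^{\alpha_i}\mid m$, giving $A_S\equiv 1$ and $B_S\equiv 0\pmod{p_i^{\alpha_i}}$; for $i\in S$ the condition $p\mid rm+1$ gives $rm\equiv -1\pmod{p_i^{\alpha_i}}$, whence $A_S\equiv 0$ and $B_S=\overline{(p-r)m}\equiv -rm\equiv 1\pmod{p_i^{\alpha_i}}$. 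Therefore $A_S=e_{S^c}$ and $B_S=e_S$, so the representation $S$ contributes exactly the pair $\{e_S,e_{S^c}\}$. This is consistent with Lemma 4.1 (the complementary representation $S^c$ contributes the same pair) and with Lemma 4.2 (which guarantees $e_S\neq e_{S^c}$, i.e.\ the pair genuinely has two elements).

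Finally I would count. From the previous step, $h_+=\bigcup_{|S|=h}\{e_S,e_{S^c}\}=\{e_T:|T|=h\}\cup\{e_T:|T|=k-h\}$. Two pairs coming from $S_1\neq S_2$ share an element only if $S_1=S_2^c$, which forces $|S_1|=k-h$ and hence $h=k/2$. If $h\neq k/2$ the index sets $\{T:|T|=h\}$ and $\{T:|T|=k-h\}$ are disjoint, so $|h_+|=\binom{k}{h}+\binom{k}{k-h}=2\binom{k}{h}$. If $h=k/2$ (so $k$ is even) the map $S\mapsto S^c$ is a fixed-point-free involution on the size-$h$ subsets and the two families coincide, so $h_+=\{e_T:|T|=h\}$ and $|h_+|=\binom{k}{h}$. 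The step I expect to be the crux is the residue computation identifying $A_S$ with $e_{S^c}$ and $B_S$ with $e_S$ — in particular deducing $B_S\equiv 1\pmod{p_i^{\alpha_i}}$ for $i\in S$ from $rm\equiv -1$ — since everything afterwards is bookkeeping with subsets; the $h=k/2$ case also needs the small extra care that complementary representations are genuinely identified, which Lemma 4.1 supplies.
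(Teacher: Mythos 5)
Your proof is correct, but it takes a genuinely different route from the paper. You decompose $\mathbbm{Z}_n$ via the Chinese Remainder Theorem, observe that an idempotent reduces to $\overline{0}$ or $\overline{1}$ modulo each $p_i^{\alpha_i}$ (since $\mathbbm{Z}_{p_i^{\alpha_i}}$ is local), and thereby label the $2^k$ idempotents bijectively by subsets $T\subseteq\{1,\dots,k\}$; the key computation $A_S=e_{S^c}$, $B_S=e_S$ then turns the whole count into bookkeeping with subsets and their complements. The paper instead stays entirely inside its elementary divisibility framework: it fixes two arbitrary representations $n=pm$ and $n=p'm'$ in $h_+$ with $p,m,p',m'$ pairwise distinct, and rules out each of the six possible coincidences among the four idempotents $\overline{rm+1}$, $\overline{(p-r)m}$, $\overline{r'm'+1}$, $\overline{(p'-r')m'}$ one at a time, using $p\mid rm+1$, $(p,r)=1$ and similar facts from Theorem 3.3; the case $k=2h$ is then handled separately through Lemma 4.1. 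Your approach is shorter and more structural --- it re-derives Lemma 4.1 as the identity $\{e_S,e_{S^c}\}=\{e_{S^c},e_{(S^c)^c}\}$ and yields Corollary 4.3.1 ($g_+=h_+$ iff $g+h=k$, disjointness otherwise) essentially for free --- at the cost of invoking the ring-theoretic decomposition, which the paper deliberately avoids in favour of congruence manipulations. One small remark: you do not actually need Lemma 4.2 to see that $e_S\neq e_{S^c}$, since $S\neq S^c$ always holds and $T\mapsto e_T$ is injective; what you do implicitly need from Lemma 4.2 is the \emph{existence} of a valid $r$ for every size-$h$ subset $S$, so that each representation genuinely contributes its pair.
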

 \begin{proof}
     Recall that  for  $h$ with $1 \leq h \leq k-1$, \\
     $  h_+=$ $\{\text{idempotents of }\mathbbm{Z}_n, \text{  obtained using Theorem 3.1} : n=p_{j_1}^{\alpha_{{j_1}}}  p_{j_2}^{\alpha_{{j_2}}} \cdots p_{j_h}^{\alpha_{{j_h}}} m, \text{ for some }\\j_1,j_2, \cdots, j_h, 1 \leq j_1 \neq j_2 \neq \cdots \neq j_h\leq k, m={\displaystyle \prod_{i=1, i \neq j_1,\cdots, j_h}^{k} p_{i}^{\alpha_i}} \}$.\\
     Now there are ${k} \choose {h}$ different representations of $n$ of the form described in $h_+$ and for each representation, we obtain two nontrivial idempotents of $\mathbbm{Z}_n$ using Theorem 3.1.
     To prove the theorem, it's enough if we choose two arbitrary representations of $n$ of the form discussed in $h_+$ and show that the four idempotent elements obtained using Theorem 3.1 are distinct.\\
     Let $p=p_{j_1}^{\alpha_{{j_1}}} p_{j_2}^{\alpha_{{j_2}}} \cdots p_{j_h}^{\alpha_{{j_h}}} $ and $m=p_{j_{h+1}}^{\alpha_{{j_{h+1}}}} p_{j_{h+2}}^{\alpha_{{j_{h+2}}}} \cdots p_{j_k}^{\alpha_{{j_k}}} $, where $1 \leq j_i \leq k$, $\alpha_{j_i} \in \mathbbm{N}$ for $i= 1,2, \cdots ,k$ and $1 \leq j_h \leq k-1$ for some $h$ such that $1 \leq h \leq k-1$. Then for $n=p\cdot m$ and $m=ps+t$, where $s(\geq 0) \in \mathbbm{Z}$ and $1 \leq t,r \leq p-1$ , we have that $\overline{r \cdot m+1}$ and $ \overline{(p-r) \cdot m}$ are nontrivial idempotent of $\mathbbm{Z}_{n}$ such that  $ p\mid rt+1$ ( using Theorem 3.1, Lemma 4.2).\\
     Next let ${p'}=p_{g_1}^{\alpha_{{g_1}}}  p_{g_2}^{\alpha_{{g_2}}} \cdots p_{g_h}^{\alpha_{{g_h}}} $ and ${m'}=p_{g_{h+1}}^{\alpha_{{g_{h+1}}}} p_{g_{h+2}}^{\alpha_{{g_{h+2}}}} \cdots p_{g_k}^{\alpha_{{g_k}}} $ where $1 \leq g_i \leq k$, $\alpha_{g_i} \in \mathbbm{N}$ for $i= 1,2, \cdots ,k$ and $1 \leq g_h \leq k-1$ for some $h$ such that $1 \leq h \leq k-1$. Note that, here $p_{g_e}^{\alpha_{{g_e}}}$'s are nothing but $p_{j_f}^{\alpha_{{j_f}}}$'s, but possibly in a different order. Then for $n={p'}\cdot {m'}$ and ${m'}={p'}{s'}+{t'}$, where ${s'}(\geq 0) \in \mathbbm{Z}$ and $1 \leq {t'}, {r'} \leq {p'}-1$, we have that $\overline{{r'} \cdot {m'}+1}, \overline{({p'}-{r'}) \cdot {m'}}$ are nontrivial idempotents of $\mathbbm{Z}_{n}$ such that  $ {p'}\mid {r'}{t'}+1$ (using Theorem 3.1, Lemma 4.2).
     Now $n= pm = {p'} {m'}$. If we take $p={p'}$, then it implies that $m={m'}$ and we obtain 2 nontrivial idempotents (using Lemma 4.1) of $\mathbbm{Z}_n$. In that case, we are not able to get any relation among the idempotents of $\mathbbm{Z}_n$, obtained using Theorem 3.1.
     So, we take $p\neq{p'}$ and $m\neq{m'}$.
     If we consider $k$ is even and $k=2h$, then for $p=p_{j_1}^{\alpha_{{j_1}}} p_{j_2}^{\alpha_{{j_2}}} \cdots p_{j_h}^{\alpha_{{j_h}}} $, $m=p_{j_{h+1}}^{\alpha_{{j_{h+1}}}} p_{j_{h+2}}^{\alpha_{{j_{h+2}}}} \cdots p_{j_k}^{\alpha_{{j_k}}} $,  and
     ${p'}=m=p_{j_{h+1}}^{\alpha_{{j_{h+1}}}} p_{j_{h+2}}^{\alpha_{{j_{h+2}}}} \cdots p_{j_k}^{\alpha_{{j_k}}}$, ${m'}=p=p_{j_1}^{\alpha_{{j_1}}}  p_{j_2}^{\alpha_{{j_2}}} \cdots p_{j_h}^{\alpha_{{j_h}}} $, we have $n= pm = {p'} {m'}$ and for these 2 different representations of $n$ we obtain 2 nontrivial idempotents (using Lemma 4.1) of $\mathbbm{Z}_n$.
     So, we exclude the condition of $k=2h$ and we discuss this in the second part. Here we take $p \neq {m'}$ and $m\neq {p'}$.
     Hence, we must have $p \neq {m'}$, $m\neq {p'}$, $p\neq{p'}$ and $m\neq{m'}$, i.e., $p, m, {p'}, {m'}$ are all distinct.
     Let  $ \alpha=\overline{r \cdot m+1} $, $\beta=\overline{(p-r) \cdot m}$, $\gamma=\overline{{r'} \cdot {m'}+1}$, $\delta=\overline{({p'}-{r'}) \cdot {m'}}$ be the idempotents, then to obtain the result in the first part, it's enough to show that $\alpha, \beta, \gamma, \delta$ are all distinct.\\
     If possible, let $\alpha =\beta$. Then we have $\overline{r \cdot m+1}= \overline{(p-r) \cdot m}$, i.e., $n \mid rm+1-pm+rm$.
     As $n=pm$ and $\overline{r \cdot m+1}$ is a nontrivial idempotent of $\mathbbm{Z}_n$ then $p \mid rm+1$.
     Therefore, $p\mid rm$, where $(p,m)=1$. Then $p \mid r$, where $1 \leq r \leq p-1$, which is a contradiction.
     Hence, we have $\alpha \neq \beta$ and similarly we can prove that $\gamma \neq \delta$.\\
     Next we prove that $\alpha \neq \gamma$.
     If possible, let $\overline{r \cdot m+1} = \overline{{r'}\cdot {m'} +1}$ where $1<rm+1,{r'} {m'} +1 <n $. Then we have $rm+1={r'} {m'} +1$, i.e., $rm={r'} {m'}$.
     Now we have $pm={p'} {m'}$, which implies $pmr={p'} {m'}r$. As $rm={r'} {m'}$, then we have  $p{r'} {m'}=nr$, i.e., $n={p'} {m'} \mid p{r'} {m'}$, where $({p'}, {r'})=1$ (from Theorem 3.3).
     So, we have ${p'} \mid p$.\\ Similarly, for $pm{r'}={p'} {m'}{r'}$ we have $n=pm \mid {p'}mr$, where $(p,r)=1$ (from Theorem 3.3).\\ So, we have $p \mid {p'}$.
     Therefore, $p= {p'}$, which is a contradiction as $p \neq {p'}$.
     Hence, we have $rm+1\neq {r'} {m'} +1$, where $1<rm+1,{r'} {m'} +1 <n $ and consequently $\overline{r \cdot m+1} \neq \overline{{r'}\cdot {m'} +1}$, i.e., $\alpha \neq \gamma$. 
     We have $\overline{r \cdot m+1} \neq \overline{{r'}\cdot {m'} +1}$ and $n= pm = {p'} {m'}$. So, we get that $\overline{r \cdot m} \neq \overline{{r'}\cdot {m'} }$, which implies $\overline{n-r \cdot m} \neq \overline{n-{r'}\cdot {m'} }$, i.e., $\overline{(p-r) \cdot m} \neq \overline{({p'}-{r'}) \cdot {m'}}$.
     Therefore, $\beta \neq \delta$.\\
     Next we prove that $\overline{r \cdot m+1} \neq \overline{({p'}-{r'}) \cdot {m'}} $.
     If possible, let $\overline{r \cdot m+1} = \overline{({p'}-{r'}) \cdot {m'}} $.
     As  $2 < {r \cdot m+1},{({p'}-{r'}) \cdot {m'}} <n$, then we get that $r  m+1=({p'}-{r'})  {m'}$. We know that $n=pm={p'} {m'}$ and $p \mid rm+1$, ${p'} \mid {r'}  {m'}+1 $, which implies $p \mid {r'}{m'}$ and ${p'} \mid rm$.\\
     Let $rm = {p'}u$, where $u \in \mathbbm{Z}^+$. Then we get that $rm{m'}= {p'} {m'}u= nu$, i.e., $n=pm \mid rm{m'}$. As $(p,r)=1$ (using Theorem 3.3), then we get that $p \mid {m'}$. Similarly we get that $ {p'} \mid m$.\\
     For $a, b \in \mathbbm{Z}^+$, let $m={p'}a$ and ${m'} =pb$, then $n=pm={p'} {m'}$ implies that $a =b$. So we get that $m={p'}a$ and ${m'} =pa$.\\
     Now as we considered $\overline{r \cdot m+1} = \overline{({p'}-{r'}) \cdot {m'}}$, we get that $m \mid {r'}  {m'}+1 $. This implies that $(m,{m'})=1$. We have $a$ as the common divisor of $m$ and ${m'}$, then $a$ must be equal to 1. This implies that $m={p'}$ and ${m'} =p$, which is a contradiction as $m \neq {p'}$ and ${m'}  \neq p$.\\
     Hence, $r  m+1\neq ({p'}-{r'})  {m'}$, where $2 < {r \cdot m+1},{({p'}-{r'}) \cdot {m'}} <n$. Then we get that $\overline{r \cdot m+1} \neq \overline{({p'}-{r'}) \cdot {m'}} $.\\
     Therefore, $\alpha\neq \delta$. Similarly, we can prove that $\beta \neq \gamma$.\\
     Now from the six possible relations among $\alpha,\beta,\gamma,\delta$, we conclude that $\alpha,\beta,\gamma,\delta$ are all distinct. If $h \neq k/2$, then for every representation of $n$ in $h_+$, we obtain 2 unique nontrivial idempotent of $\mathbbm{Z}_n$. In $h_+$, there are ${k} \choose {h}$ such representations of $n$ and for each representation we obtain 2 unique nontrivial idempotent of $\mathbbm{Z}_n$. Hence, $| h_+|=2 {k \choose h}$,  where  $h\neq k/2$.
    
    For the second part, let $k=2h$, then there are ${k \choose h}$ distinct representations of $n= p_1^{\alpha _1}p_2^{\alpha _2}\cdots p_k^{\alpha _k}$ and out of these representations, we can find pairs of distinct representations of the form $(n=pm,n={p'} {m'} )$ such that $p=p_{j_1}^{\alpha_{{j_1}}} \cdot p_{j_2}^{\alpha_{{j_2}}} \cdots p_{j_h}^{\alpha_{{j_h}}} $, $m=p_{j_{h+1}}^{\alpha_{{j_{h+1}}}} \cdot p_{j_{h+2}}^{\alpha_{{j_{h+2}}}} \cdots p_{j_k}^{\alpha_{{j_k}}} $, ${p'}=m$, ${m'}=p$, for $\alpha_{j_i} \in \mathbbm{N}$, $1\leq j_i \leq k$, $\forall i=1,2,\cdots ,k$ and  $1\leq j_h \leq k-1$ with $1 \leq i \neq g \leq k$ for $j_i \neq j_g$.
     From Lemma 4.1, for these 2 different representations of $n$, we have 2 distinct nontrivial idempotents of $\mathbbm{Z}_n$, i.e., in this case, each such pair in $h_+$ gives 2 distinct nontrivial idempotents of $\mathbbm{Z}_n$.
     Now there are $ \frac{1}{2}  {k \choose h}$ such pairs in $h_+$. As $k $ is even,  $ \frac{1}{2}  {k \choose h}$ is eventually a natural number. In total, in $h_+$ we get ${k \choose h}$ number of idempotents in   $\mathbbm{Z}_n$.
     Using the same process as in the first part of this theorem, we can show that, the ${k \choose h}$ idempotents in $h_+$ are all distinct.
     Therefore, $|{h_+|}={k \choose h}$, where $k$ is even and $k=2h$. This completes the proof.
 \end{proof}
 Next we discuss about the relation of $g_+$ and $h_+$, where $1\leq g \neq h <k$. These results lead to the main part of obtaining all idempotents of $\mathbbm{Z}_n$. 
 \begin{corollary}
     For $n= p_1^{\alpha _1}p_2^{\alpha _2}\cdots p_k^{\alpha _k}$, where $k(\geq 2) \in \mathbbm{N}$, $1\leq i \leq k  $, $\alpha_i \in \mathbbm{N} $, $p_i$'s are distinct primes, if $g+h=k$ where $1\leq g \neq h <k$, then $g_+=h_+$. Otherwise, $g_+ \cap h_+ = \phi$ whenever $g_+ \neq h_+$ and $g+h \neq k$, for $1\leq g \neq h <k$.
 \end{corollary}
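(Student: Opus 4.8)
The plan is to reduce both claims to a clean combinatorial description of each set $h_+$. The key observation is that the two idempotents which Theorem 3.1 assigns to a representation $n = P\cdot M$, with $P$ a product of some sub-collection of the prime powers $p_i^{\alpha_i}$, are nothing but the two complementary Chinese-Remainder idempotents of the splitting $\mathbbm{Z}_n \cong \mathbbm{Z}_P \times \mathbbm{Z}_M$. Indeed, writing $M = Ps+t$ and $P \mid rt+1$ as in Theorem 3.1, the proof of that theorem already gives $P \mid rM+1$, so $\overline{r\cdot M+1} \equiv 0 \pmod P$ while trivially $\overline{r\cdot M+1}\equiv 1 \pmod M$; dually $\overline{(P-r)\cdot M}\equiv 1 \pmod P$ and $\equiv 0 \pmod M$. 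Since $\overline 0$ and $\overline 1$ are the only idempotents of each local ring $\mathbbm{Z}_{p_i^{\alpha_i}}$, every idempotent of $\mathbbm{Z}_n$ is determined by the set $S$ of indices $i$ at which it reduces to $\overline 1$ modulo $p_i^{\alpha_i}$; write $e_S$ for this idempotent, and note that $S \neq S'$ forces $e_S \neq e_{S'}$.

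With this language, a representation whose $P$-part is a product of $h$ of the prime powers, say over an index set $A$ with $|A| = h$, produces exactly $e_A$ (support of size $h$) and $e_{\bar A}$ (support of size $k-h$). Letting $A$ run over all $\binom{k}{h}$ index sets of size $h$, I obtain the description
\[
h_+ \;=\; \{\, e_S : |S| = h \,\}\ \cup\ \{\, e_S : |S| = k-h \,\},
\]
i.e. $h_+$ is precisely the collection of idempotents whose support has size $h$ or $k-h$. This is consistent with the cardinalities $2\binom{k}{h}$ and $\binom{k}{h}$ recorded in Theorem 4.3, the two sizes in the union coinciding exactly when $h = k/2$.

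Granting this description, both assertions become elementary set theory about the unordered pairs $\{g,k-g\}$ and $\{h,k-h\}$. For the first part, if $g+h=k$ then $\{h,k-h\} = \{k-g,g\} = \{g,k-g\}$, so $g_+$ and $h_+$ are described by the same two support sizes and therefore $g_+ = h_+$; this is the conceptual meaning of Lemma 4.1, which already matches the idempotents of a representation $n=pm$ with those of its swap $n=mp$, the latter living in $(k-h)_+ = g_+$. For the second part, an element of $g_+ \cap h_+$ is some $e_S$ with $|S| \in \{g,k-g\}\cap\{h,k-h\}$; since distinct supports give distinct idempotents, this intersection is non-empty only if one of $g,k-g$ equals one of $h,k-h$, that is only if $g=h$ or $g+h=k$. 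Under the hypotheses $g\neq h$ and $g+h\neq k$ neither occurs, so $g_+\cap h_+ = \emptyset$ (and $g_+\neq h_+$ then holds automatically, so the hypothesis $g_+ \neq h_+$ is in fact superfluous).

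The only real work is to justify the residue description rigorously inside the paper's setup: verifying that the two idempotents of each representation are exactly $e_A$ and $e_{\bar A}$, and that as $A$ ranges over the size-$h$ subsets these exhaust, without collision, all idempotents of support size $h$ or $k-h$. I expect this bookkeeping to be routine given Theorem 3.1. If one prefers to stay entirely within the computational idiom of Theorem 4.3, the disjointness in the second part can instead be proved by taking $\alpha\in g_+$ from $n=PM$ and $\gamma\in h_+$ from $n=P'M'$ and replaying the six divisibility comparisons of that proof, using $(P,r)=1$ and $(P',r')=1$ from Theorem 3.3; the one change is that $P,P'$ now carry $g$ and $h$ prime factors, so the step that previously forced $P=P'$ must here be read as forcing $g+h=k$, against hypothesis. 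Keeping track of which of $P,M,P',M'$ share prime factors when $g\neq h$ is the fiddliest point, whereas the set-theoretic core above is immediate.
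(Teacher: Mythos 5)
Your proof is correct, but it takes a genuinely different route from the paper's. The paper stays entirely inside its divisibility framework: for the equality $g_+=h_+$ it pairs each representation $n=pm$ counted in $g_+$ with the swapped representation $n=p'm'$ (where $p'=m$, $m'=p$) counted in $h_+=(k-g)_+$ and invokes Lemma 4.1 to see that the two pairs of idempotents coincide, backed up by the cardinality count $|g_+|=2\binom{k}{g}=2\binom{k}{h}=|h_+|$ from Theorem 4.3; for the disjointness it re-runs the six pairwise-distinctness comparisons from the proof of Theorem 4.3 on one representation drawn from $g_+$ and one from $h_+$. You instead pass to the Chinese Remainder description, identifying each idempotent with the set $S$ of indices at which it reduces to $\overline{1}$ modulo $p_i^{\alpha_i}$, so that $h_+=\{e_S : |S|\in\{h,k-h\}\}$, and both claims collapse to elementary statements about the unordered pairs $\{g,k-g\}$ and $\{h,k-h\}$. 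Your residue computation is sound --- that $\overline{rM+1}\equiv 0 \pmod{P}$ is exactly what the proof of Theorem 3.1 establishes, and $\overline{rM+1}\equiv 1\pmod{M}$ is trivial --- and Lemma 4.2 supplies the existence of the required $r$ for every representation, so each size-$h$ support does arise. Your approach is shorter and more conceptual, it subsumes the cardinality formulas of Theorem 4.3 rather than depending on them, and it exposes that the hypothesis $g_+\neq h_+$ in the second clause is redundant (under $g\neq h$ and $g+h\neq k$ the support-size pairs are automatically distinct). What the paper's argument buys is self-containedness in its elementary idiom: it never appeals to the decomposition $\mathbbm{Z}_n\cong\prod_i\mathbbm{Z}_{p_i^{\alpha_i}}$ or to the fact that each local factor has only the idempotents $\overline{0},\overline{1}$, only to divisibility facts already proved. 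If you adopt your route, do write out the one piece of bookkeeping you defer, namely that distinct supports give distinct idempotents; it is immediate from CRT but it is the step carrying all the weight of the disjointness claim.
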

 \begin{proof}
     For given $n= p_1^{\alpha _1}p_2^{\alpha _2}\cdots p_k^{\alpha _k}$, where $k(\geq 2) \in \mathbbm{N}$, $1\leq i \leq k  $, $\alpha_i \in \mathbbm{N} $, $p_i$'s are distinct primes, let $g+h=k$, where $1\leq g \neq h <k$. Then $g \neq k/2 $, $h \neq k/2$, in case $k$ is even.\\
     So, from Theorem 4.1, we get that $ | g_+|=2{k \choose g}$ and $|h_+|=2{k \choose h}$.
     As $g+h=k$, then we have ${k \choose h}={k \choose k-h}={k \choose g}$.
     Therefore, $|g_+|=|h_+|$.\\
     To prove that $g_+=h_+$, it's enough to show that, in $g_+$ for a representation of the form $n=pm$, there is an unique representation of the form $n= {p'} {m'}$ in $h_+$ such that the nontrivial idempotents of $\mathbbm{Z}_n$, we get for these two different representations of $n$, are same.\\
     Let $p=p_{j_1}^{\alpha_{{j_1}}} p_{j_2}^{\alpha_{{j_2}}} \cdots p_{j_g}^{\alpha_{{j_g}}} $, $m=p_{j_{g+1}}^{\alpha_{{j_{g+1}}}} p_{j_{g+2}}^{\alpha_{{j_{g+2}}}} \cdots p_{j_k}^{\alpha_{{j_k}}} $, ${p'}=m$, ${m'}=p$, for $\alpha_{j_i} \in \mathbbm{N}$,$1\leq j_i \leq k$, $\forall i=1,2,\cdots ,k$ and $1\leq j_g \leq k-1$. Then the two idempotents we obtain for each representation $n=pm$ and $n= {p'} {m'}$, using Theorem 3.1, are same in some order, i.e., if $\overline{r \cdot m+1}$ and $\overline{(p-r) \cdot m}$ (elements of $g_+$) be idempotents of $\mathbbm{Z}_n$ for $n=pm$, and $\overline{{r'} \cdot {m'}+1}, \overline{({p'}-{r'}) \cdot {m'}}$ (elements of $h_+$) be idempotents of $\mathbbm{Z}_n$ for $n= {p'} {m'}$, then from Lemma 4.2  we get $\overline{r m+1}=\overline{({p'}-{r'}) {m'}}$ and $\overline{{r'} \cdot {m'}+1}=\overline{(p-r) \cdot m}$, where $r, {r'}$ are as discussed in Theorem 3.1.
     For the choice of $p,m$ with $n=pm$, the choice of ${p'}=m$ and ${m'}=p$ is unique.
      Hence, $g_+=h_+$.  

      For the second part, let $g_+\neq h_+$, where $1 \leq g \neq h < k$ with $g+h \neq k$, and let $p=p_{j_1}^{\alpha_{{j_1}}}  p_{j_2}^{\alpha_{{j_2}}} \cdots p_{j_g}^{\alpha_{{j_g}}} $, $m=p_{j_{g+1}}^{\alpha_{{j_{g+1}}}} p_{j_{g+2}}^{\alpha_{{j_{g+2}}}} \cdots p_{j_k}^{\alpha_{{j_k}}} $, ${p'}=p_{u_1}^{\alpha_{{u_1}}}  p_{u_2}^{\alpha_{{u_2}}} \cdots p_{u_h}^{\alpha_{{u_h}}} $, ${m'}=p_{u_{h+1}}^{\alpha_{{u_{h+1}}}} p_{u_{h+2}}^{\alpha_{{u_{h+2}}}} \cdots p_{u_k}^{\alpha_{{u_k}}} $, where $1 \leq j_i, u_i \leq k$ and  $\alpha_{j_i}, \alpha_{u_i} \in \mathbbm{N}$ for $i= 1,2, \cdots ,k$; and $1 \leq j_h, u_h \leq k-1$ for some $h$ such that $1 \leq h \leq k-1$, and $j_A \neq j_B, u_A \neq u_B$ for $1 \leq A \neq B \leq k$. Then we get that $p \neq {p'}, m \neq {m'}, {p'} \neq m, {m'} \neq p $.\\
      Similar to the first part of Theorem 4.1, we  prove that the two idempotent of $\mathbbm{Z}_n$ for $n=pm$, and the two idempotent of $\mathbbm{Z}_n$ for $n= {p'}{m'}$ (in total these 4 idempotents) are distinct idempotents of $\mathbbm{Z}_n$.
      Therefore, the result is true for any representation of $n$ in $g_+, h_+$.\\
      So, for $g_+\neq h_+$  and $g+h \neq k$, with $1\leq g \neq h <k$, we have $g_+ \cap h_+ = \phi$.
  \end{proof}

	\section{Final Results}\label{Final Results}
	In this part, we discuss multiple proposition which lead to the methods of calculating all distinct idempotents of $\mathbbm{Z}_n$. Most importantly, we observe that, every nontrivial idempotent of $\mathbbm{Z}_n$ has some expression of specific form. 
 \begin{proposition}
 For $n= p_1^{\alpha _1}p_2^{\alpha _2}\cdots p_k^{\alpha _k}$, where $k(\geq 2) \in \mathbbm{N}$, $1\leq i \leq k  $, $\alpha_i \in \mathbbm{N} $, $p_i$'s are distinct primes, the following hold.
 \begin{enumerate}[label=(\alph*)]
       \item If $k$ is odd, then the $2^k$ idempotents of $\mathbbm{Z}_n$ are given by the elements of  ${1_+},{2_+}, \cdots $, ${(\frac{k-1}{2})_+}$.
       \item If $k$ is even, then the $2^k$ idempotents of $\mathbbm{Z}_n$ are given by the elements of ${1_+},{2_+}, \cdots $, $ {(\frac{k}{2}-1)_+} $, and $(\frac{k}{2})_+$. We calculate the elements of $(\frac{k}{2})_+$ in such a way that for any 2 distinct representations of $n$ of the form $ n=pm$, $n={p'} {m'}$ in $(\frac{k}{2})_+$, we have $p \neq {m'}, {p'} \neq m$.
   \end{enumerate}
   \end{proposition}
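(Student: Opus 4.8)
The plan is to show that the sets appearing in each list are pairwise disjoint and that the total number of idempotents they contain is exactly $2^k-2$, the number of nontrivial idempotents of $\mathbb{Z}_n$. Since every element of each $h_+$ is, by Theorem 3.1, a nontrivial idempotent, a disjoint union of the right total size must then be the complete collection of nontrivial idempotents; adjoining the two trivial idempotents $\overline{0}$ and $\overline{1}$ accounts for all $2^k$ idempotents. So the proof reduces to a cardinality bookkeeping built on the structural facts already proved.

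First I would record the cardinalities from Theorem 4.1: for every index $h$ in the list with $h\neq k/2$ one has $|h_+|=2\binom{k}{h}$, while in case (b) the single index $h=k/2$ contributes $|(\tfrac{k}{2})_+|=\binom{k}{k/2}$. Here the prescribed convention $p\neq m'$, $p'\neq m$ for distinct representations in $(\tfrac{k}{2})_+$ is exactly what guarantees, via Lemma 4.1, that no representation and its complementary swap are both used, so that the count $\binom{k}{k/2}$ of distinct idempotents is genuinely attained rather than halved by coincidences.

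Next comes the disjointness, which I expect to be the main point to get right. For two distinct indices $g\neq h$ drawn from a list I would first check $g+h\neq k$: in case (a) both indices are at most $(k-1)/2$, so $g+h\leq (k-1)/2+(k-3)/2<k$, and in case (b) the largest possible sum is $\tfrac{k}{2}+(\tfrac{k}{2}-1)=k-1<k$. With $g\neq h$ and $g+h\neq k$ in hand, a factorization $n=pm$ producing an element of $g_+$ and a factorization $n=p'm'$ producing an element of $h_+$ have $p,m,p',m'$ pairwise distinct, since their numbers of prime-power factors are $g,k-g,h,k-h$, and these four values are distinct precisely because $g\neq h$ and $g+h\neq k$. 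The four-idempotents-are-distinct computation already carried out in Theorem 4.1, equivalently the second part of Corollary 4.1.1, then applies and gives $g_+\cap h_+=\phi$. The only index needing separate mention is $h=k/2$ in case (b), but every other listed index $g$ satisfies $g<k/2$, so the same argument yields $(\tfrac{k}{2})_+\cap g_+=\phi$ as well.

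Finally I would sum the cardinalities. In case (a), using $\binom{k}{h}=\binom{k}{k-h}$ together with $\sum_{h=0}^{k}\binom{k}{h}=2^k$, one gets $\sum_{h=0}^{(k-1)/2}\binom{k}{h}=2^{k-1}$, hence $\sum_{h=1}^{(k-1)/2}2\binom{k}{h}=2(2^{k-1}-1)=2^k-2$. In case (b) the analogous split gives $\sum_{h=1}^{k/2-1}\binom{k}{h}=\tfrac{1}{2}(2^k-2-\binom{k}{k/2})$, so that $\sum_{h=1}^{k/2-1}2\binom{k}{h}+\binom{k}{k/2}=2^k-2$ as well. In both cases the disjoint union of the listed sets contains exactly $2^k-2$ nontrivial idempotents, which therefore exhaust them, and with $\overline{0},\overline{1}$ we obtain all $2^k$ idempotents of $\mathbb{Z}_n$. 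The step I would be most careful about is the disjointness involving $(\tfrac{k}{2})_+$, ensuring the stated convention prevents the Lemma 4.1 coincidences from collapsing distinct representations and from letting the middle block overlap any $g_+$ with $g<k/2$.
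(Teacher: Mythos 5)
Your proposal is correct and takes essentially the same route as the paper: both arguments combine the cardinalities $|h_+|=2\binom{k}{h}$ (and $\binom{k}{k/2}$ for the middle index) from Theorem 4.1 with the pairwise disjointness from the corollary, sum the binomial coefficients to reach $2^k-2$, and conclude by comparison with the known total count of idempotents. Your explicit verification that $g+h\neq k$ for distinct listed indices, and your remark on why the convention for $(\tfrac{k}{2})_+$ prevents the Lemma 4.1 coincidences, make the same points the paper makes, only slightly more carefully.
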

   \begin{proof}
       Let $k$ be odd, then we can get the elements of ${1_+},{2_+}, \cdots , {(\frac{k-1}{2})_+},{(\frac{k+1}{2})_+}, \cdots,(k-2)_+, (k-1)_+$. We have $1+(k-1)=2+(k-2)= \cdots = (\frac{k-1}{2})+(\frac{k+1}{2})=k$.\\
       From Corollary 4.3.1, we get that, $1_+=(k-1)_+, 2_+=(k-2)_+, \cdots, (\frac{k-1}{2})_+=(\frac{k+1}{2})_+$ and $g_+ \cap h_+= \phi$ where $1 \leq g \neq h \leq (\frac{k-1}{2}) < k$. We know that ${{2n+1} \choose {0}}+{{2n+1} \choose {1}}+ \cdots +{{2n+1} \choose {n}}= 2^{2n}$.\\
       Now from Theorem 4.1, we get that, $|{h_+|}=2{ {k} \choose {h}}$ where $1 \leq h \leq k-1.$ Thus the number of nontrivial distinct idempotents of $\mathbbm{Z}_n$, in this case,  is $2{{k} \choose {1}}+2{{k} \choose {2}}+ \cdots +2{{k} \choose {\frac{k-1}{2}}}= 2 \cdot (2^  {\frac{2(k-1)}{2}} -1)=2^k-2$.
       Including the trivial idempotents,  the elements of  ${1_+},{2_+},\cdots,$ $ {(\frac{k-1}{2})_+}$, yield the $2^k$ idempotents of $\mathbbm{Z}_n$.

       For the second part, let $k$ be even, then we can have the elements of  ${1_+},{2_+}, \cdots , {(\frac{k}{2}-1)_+}, (\frac{k}{2})_+,$ ${(\frac{k}{2}+1)_+}, \cdots,(k-2)_+, (k-1)_+$. We have $1+(k-1)=2+(k-2)= \cdots = (\frac{k}{2}-1)+(\frac{k}{2}+1)=k$.
       From Corollary 4.3.1, we get that, $1_+=(k-1)_+, 2_+=(k-2)_+, \cdots, (\frac{k}{2}-1)_+=(\frac{k}{2}+1)_+$ and $g_+ \cap h_+= \phi$ where $1 \leq g \neq h \leq (\frac{k}{2}-1) < k$. Additionally we get the elements of $(\frac{k}{2})_+$ and in $(\frac{k}{2})_+$, for two different representations $n=pm$ and $n={p'} {m'}$, if we consider that $p={m'}$ and  ${p'}=m$, then using Lemma 4.1, for these two different representations of $n $, we obtain the same idempotents.
       So, we get the elements of  $(\frac{k}{2})_+$ in such a way that for any 2 distinct representations of $n$ of the form $ n=pm$, $n={p'} {m'}$ in $(\frac{k}{2})_+$, we have $p \neq {m'},$ and $ {p'} \neq m$. In that case, from Theorem 4.1, we have $|  (\frac{k}{2})_+|= { {k} \choose {\frac{k}{2}}}$.\\
       Here the number of nontrivial distinct idempotents of $\mathbbm{Z}_n$ is 
       \begin{align*}
           & = 2{{k} \choose {1}}+2{{k} \choose {2}}+ \cdots +2{{k} \choose {\frac{k}{2}-1}}+ {{k} \choose {\frac{k}{2}}}\\
           & = {{k} \choose {1}}+{{k} \choose {k-1}}+{{k} \choose {2}}+{{k} \choose {k-2}}+ \cdots + {{k} \choose {\frac{k}{2}-1}}+{{k} \choose {\frac{k}{2}+1}}+{ {k} \choose {\frac{k}{2}}}\\
           & = {{k} \choose {1}}+{{k} \choose {2}}+ \cdots +{{k} \choose {k-1}}=  2^k- 2.          
       \end{align*}
       Including the trivial idempotents, the elements of ${1_+},{2_+}, \cdots , {(\frac{k}{2}-1)_+},$ $(\frac{k}{2})_+$ with some conditions on $(\frac{k}{2})_+$, give us the  $2^k$ idempotents of $\mathbbm{Z}_n$.
   \end{proof}
   Using  Proposition 5.1, next we present the working method to calculate all the distinct idempotents of $\mathbbm{Z}_n$.\\
\noindent\underline{\textbf{Working method (1)(a)}} : Consider that $n=2^2\cdot3^2\cdot5\cdot7\cdot11=13860$. Here $k=5$. Now we calculate the elements of $1_+$, $2_+$ in the  Table 5.1 below, using the first part of Proposition 5.1.
   \begin{center}
   \strutlongstacks{T}
\begin{tabular}{ |c|c|c|c|c|c|c| } 
\hline
$\Longstack{set\\of\\idempo-\\tents}$ & \Longstack{$p= \displaystyle \prod_{i=1} ^{h}p_{i}^{\alpha_i}$\\ \\$1 \leq h \leq 2$} & \Longstack{$m={\displaystyle \prod_{i=1}^{k} p_{i}^{\alpha_i}}$\\ \\$i \neq j_1,\cdots, j_h$\\$(m,p)=1$} &\Longstack{$m=ps+t,$ \\$s (\geq 0)\in \mathbbm{Z}$\\ $1 \leq t \leq p-1$} & \Longstack{ $1 \leq r \leq p-1, $ \\ $p \mid rt+1$ \\$(p,r)=1$ }& {idempotents}&idempotents\\
\hline
$h_+$ & $p$ & $m$ & $t$ & $r$ & $\overline{r \cdot m+1}$ & $\overline{(p-r)\cdot m}$\\
\hline
\multirow{5}{1em}{$1_+$} & $2^2$ & $3^2 \cdot 5 \cdot 7 \cdot 11 $& 1 & 3 & $\overline{10396}$ & $\overline{3465}$\\ 
& $3^2$ & $2^2 \cdot 5 \cdot 7\cdot 11 $ & 1 & 8 &$\overline{12321}$ & $\overline{1540}$\\ 
& 5 & $2^2 \cdot 3^2 \cdot 7 \cdot 11 $& 2 & 2 & $\overline{5545}$ & $\overline{8316}$ \\ 
& 7 & $2^2 \cdot 3^2 \cdot 5 \cdot 11 $& 6 & 1 & $\overline{1981}$ & $\overline{11880}$\\
& 11 & $2^2 \cdot 3^2 \cdot 5 \cdot 7 $& 6 & 9 & $\overline{11341}$ & $\overline{2520}$\\
\hline
\multirow{10}{1em}{$2_+$} & $2^2 \cdot 3^2$ & $ 5 \cdot 7 \cdot 11 $& 25 & 23 & $\overline{8856}$ & $\overline{5005}$\\
& $2^2 \cdot 5$ &  $3^2 \cdot 7 \cdot 11 $& 13 & 3 & $\overline{2080}$ & $\overline{11781}$ \\
& $2^2 \cdot 7$ &  $3^2 \cdot 5 \cdot 11 $& 19 & 25 & $\overline{12376}$ & $\overline{1485}$ \\
& $2^2 \cdot 11$ &  $3^2 \cdot 5 \cdot 7 $& 7 & 25 & $\overline{7876}$ & $\overline{5985}$ \\
& $3^2 \cdot 5$ &  $2^2 \cdot 7 \cdot 11 $& 38 & 13 & $\overline{4005}$ & $\overline{9856}$ \\
& $3^2 \cdot 7$ &  $2^2 \cdot 5 \cdot 11 $& 31 & 2 & $\overline{441}$ & $\overline{13420}$ \\
& $3^2 \cdot 11$ &  $2^2 \cdot 5 \cdot 7 $& 41 & 70 & $\overline{9801}$ & $\overline{4060}$ \\
& $ 5\cdot 7$ &  $2^2 \cdot 3^2 \cdot 11 $& 11 & 19 & $\overline{7525}$ & $\overline{6336}$ \\
& $ 5\cdot 11$ &  $2^2 \cdot 3^2 \cdot 7 $& 32 & 12 & $\overline{3025}$ & $\overline{10836}$ \\
& $ 7\cdot 11$ &  $2^2 \cdot 3^2 \cdot 5 $& 26 & 74 & $\overline{13321}$ & $\overline{540}$ \\
\hline
\end{tabular}
\end{center}
\begin{center}
    Table 5.1
\end{center}
\noindent Including $\overline{0}$ and $\overline{1}$, we obtain the $2^5=32$ idempotents of $\mathbbm{Z}_{13860}$ from the Table 5.1.\\
 \underline{\textbf{Working method (1)(b)}} :  Consider that $n=2^2\cdot3\cdot5\cdot7=420$. Here $k=4$. Now we calculate the elements of $1_+$. $2_+$ in the  Table 5.2 below, using the second part of Proposition 5.1.
\begin{center}
   \strutlongstacks{T}
\begin{tabular}{ |c|c|c|c|c|c|c| } 
\hline
$\Longstack{set\\of\\idempo-\\tents}$ & \Longstack{$p= \displaystyle \prod_{i=1} ^{h}p_{i}^{\alpha_i}$\\ \\$1 \leq h \leq 2$} & \Longstack{$m={\displaystyle \prod_{i=1}^{k} p_{i}^{\alpha_i}}$\\ \\$i \neq j_1,\cdots, j_h$\\$(m,p)=1$} &\Longstack{$m=ps+t,$ \\$s (\geq 0)\in \mathbbm{Z}$\\ $1 \leq t \leq p-1$} & \Longstack{ $1 \leq r \leq p-1, $ \\ $p \mid rt+1$ \\$(p,r)=1$ }& {idempotents}&idempotents\\
\hline
$h_+$ & $p$ & $m$ & $t$ & $r$ & $\overline{r \cdot m+1}$ & $\overline{(p-r)\cdot m}$\\
\hline
\multirow{4}{1em}{$1_+$} & $2^2$ & $3 \cdot 5 \cdot 7 $& 5 & 3 & $\overline{316}$ & $\overline{105}$\\
& $3$ & $ 2^2\cdot 5 \cdot 7 $& 2 & 1& $\overline{141}$ & $\overline{280}$\\
& $5$ & $ 2^2\cdot 3 \cdot 7 $& 4 & 1& $\overline{85}$ & $\overline{336}$\\
& $7$ & $ 2^2\cdot 3 \cdot 5  $& 4 & 5 & $\overline{301}$ & $\overline{120}$\\
\hline
\multirow{3}{1em}{$2_+$} & $2^2 \cdot 3$ & $ 5 \cdot 7 $& 11 & 1 & $\overline{36}$ & $\overline{385}$\\
& $2^2 \cdot 5 $ & $ 3 \cdot 7 $& 1 & 19 & $\overline{400}$ & $\overline{21}$\\
& $2^2 \cdot 7 $ & $ 3 \cdot  5  $& 15 & 13 & $\overline{196}$ & $\overline{22 5}$\\
\hline
\end{tabular}
\end{center}
\begin{center}
    Table 5.2
\end{center}
Including $\overline{0}$ and $\overline{1}$, we obtain $ 2^4=16 $ idempotents of $\mathbbm{Z}_{420}$ from the Table 5.2.\\
\begin{proposition}
    Let $n= p_1^{\alpha _1}p_2^{\alpha _2}\cdots p_k^{\alpha _k}$, where $k(\geq 2) \in \mathbbm{N}$, $1\leq i \leq k  $, $\alpha_i \in \mathbbm{N} $, $p_i$'s are distinct primes, then all distinct idempotents of $\mathbbm{Z}_n$ can be calculated by obtaining the elements of the form $\overline{r\cdot m+1}$ (using Theorem 3.1) in $h_+$, $\forall{h}$, where $1 \leq h \leq k-1$, $p$ and $m$ are as defined in $h_+$, $m=ps+t$ with $1 \leq r,t \leq p-1 $ and $s ( \geq 0) \in \mathbbm{Z}$ such that $p \mid rt+1$.
\end{proposition}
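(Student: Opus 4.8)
The plan is to prove that the family $\{\overline{r\cdot m+1}\}$, collected over every representation $n=pm$ that occurs in $h_+$ for $1\le h\le k-1$, together with $\overline{0}$ and $\overline{1}$, is exactly the set of all $2^k$ idempotents of $\mathbbm{Z}_n$. Since the trivial idempotents are adjoined by hand, the real content is that the elements $\overline{r\cdot m+1}$ account for all $2^k-2$ nontrivial idempotents.

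First I would verify the easy inclusion: by the first part of Theorem 3.1 (with the existence of a suitable $r$ guaranteed by Lemma 4.2), each $\overline{r\cdot m+1}$ arising from a factorization $n=pm$ is a nontrivial idempotent. The number of admissible factorizations, i.e.\ of ways to write $p$ as a product of between $1$ and $k-1$ of the prime powers $p_i^{\alpha_i}$, is $\sum_{h=1}^{k-1}\binom{k}{h}=2^k-2$, so this family has at most $2^k-2$ members.

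The crux is the reverse inclusion: every nontrivial idempotent is of the form $\overline{r\cdot m+1}$. Here I would invoke Proposition 5.1, together with the identity $h_+=(k-h)_+$ from Corollary 4.3.1, to conclude that $\bigcup_{h=1}^{k-1}h_+$ contains all $2^k-2$ nontrivial idempotents. By the definition of $h_+$, any such idempotent is, for a suitable factorization $n=pm$ with $p$ a product of $h$ prime powers, either $\overline{r\cdot m+1}$ or $\overline{(p-r)\cdot m}$. In the first case it already has the required shape. In the second case — the key manoeuvre — I would pass to the complementary factorization $n=p'm'$ with $p'=m$, $m'=p$, which occurs in $(k-h)_+$, and apply Lemma 4.1: it gives precisely $\overline{(p-r)\cdot m}=\overline{r'\cdot m'+1}$, so the idempotent reappears as an $\overline{r\cdot m+1}$-type element of the complementary representation. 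Because $1\le k-h\le k-1$ whenever $1\le h\le k-1$, this complementary representation is itself among those we sweep over, and the reverse inclusion follows.

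I expect the main obstacle to be the bookkeeping in the self-complementary case, namely $k$ even with $h=k/2$, where a factorization and its complement both lie in $(k/2)_+$. There one must check that taking the $\overline{r\cdot m+1}$ form over all $\binom{k}{k/2}$ such factorizations recovers both members of each complementary idempotent pair, and so yields the same $\binom{k}{k/2}$ elements produced (under the extra selection condition) in Proposition 5.1(b); this again uses the pairing $\overline{(p-r)\cdot m}=\overline{r'\cdot m'+1}$ from Lemma 4.1 applied to two mutually complementary factorizations of equal length. Once the reverse inclusion is in hand, comparing cardinalities ($2^k-2$ factorizations against $2^k-2$ nontrivial idempotents) shows the assignment (factorization)$\,\mapsto\overline{r\cdot m+1}$ is a bijection, so the list is both complete and free of repetitions; adjoining $\overline{0},\overline{1}$ then exhibits all $2^k$ idempotents, as claimed.
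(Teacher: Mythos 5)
Your argument is correct, but it runs in the opposite logical direction from the paper's. The paper never proves surjectivity directly: it shows that the assignment sending each of the $2^k-2$ admissible factorizations $n=pm$ to $\overline{r\cdot m+1}$ is injective (by a case analysis on whether the two factorizations lie in the same $h_+$, whether they are complementary, etc., leaning on Theorem 4.1 and Lemma 4.1), and then closes by counting: $2^k-2$ pairwise distinct nontrivial idempotents of this form, measured against the known total of $2^k-2$ nontrivial idempotents, forces the list to be complete. You instead prove completeness head-on: Proposition 5.1 places every nontrivial idempotent in some $h_+$, so it is either $\overline{r\cdot m+1}$ (done) or $\overline{(p-r)\cdot m}$ for some factorization, and in the latter case the identity $B=C$ of Lemma 4.1, namely $\overline{(p-r)\cdot m}=\overline{r'\cdot m'+1}$ for the complementary factorization $p'=m$, $m'=p$, converts it into the required shape; since $1\le k-h\le k-1$ the complementary factorization is indeed among those swept, and your treatment of the self-complementary case $h=k/2$ is the right extra check. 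Your route is the more economical one for the statement as phrased, which asks only that the list be complete, and it spares you the paper's distinctness case analysis; what it gives up is that freedom from repetition among the $2^k-2$ listed elements is recovered only at the end by the cardinality comparison, whereas the paper establishes it directly. Note also that your appeal to Proposition 5.1 is legitimate (it precedes this result and does not depend on it), but since Proposition 5.1 itself rests on Theorem 4.1 and Corollary 4.3.1, the two arguments ultimately share the same foundations even though their final steps differ.
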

\begin{proof}
We consider the sets of idempotents $1_+,2_+,\cdots , (k-1)_+$. Let $1\leq h\leq k-1$, then we have the following cases.

\noindent \textbf{Case 1 }: Let $h\neq \frac{k}{2}$.\\ 
For  $1\leq h\leq k-1$, from Theorem 4.1, we  have that, for any two different representations of $n$ given by $n=pm$ and $n={p'}{m'}$ in $h_+$, the idempotents of $\mathbbm{Z}_n$ of the form $\overline{r\cdot m+1}$ and $\overline{{r'}\cdot {m'}+1}$, are different, i.e., $\overline{r\cdot m+1}\neq \overline{{r'}\cdot {m'}+1}$.\\
For $1\leq j \neq g \leq k-1$, let us consider the sets $j_+$ and $g_+$ and let $n=pm$ be a representation of $n$ in $j_+$ and $n={p'}{m'}$ be a different representation of $n$ in $g_+$. As we have $1\leq j \neq g \leq k-1$ and we are considering different representations of  $n$, trivially we have $p\neq {p'}$ and $m \neq {m'}$.\\
If $p={m'}$ and ${p'}=m$, and $\overline{{r'}\cdot {m'}+1}$, $\overline{r\cdot m+1}$ be the idempotents we obtain (using Theorem 3.1) for the representations $n=pm$ and  $n={p'}{m'}$ respectively, then using Lemma 4.1 and the first part of Theorem 4.1, we obtain that $\overline{r\cdot m+1}= \overline{({p'}-{r'})\cdot{m'}} \neq \overline{{r'}\cdot {m'}+1}$, i.e., $\overline{r\cdot m+1} \neq \overline{{r'}\cdot {m'}+1}$.\\
If $p\neq{m'}$ and ${p'}\neq m$ with $p\neq {p'}$ and $m \neq {m'}$, then similar as the proof of first part of Theorem 4.1, we obtain that $\overline{r\cdot m+1} \neq \overline{{r'}\cdot {m'}+1}$.

\noindent \textbf{Case 2} : Let $h= \frac{k}{2}$. Then we consider two different representations $n=pm$ and $n={p'}{m'}$ of $n$ in $h_+$. Similar to the previous case, we have that, $\overline{r\cdot m+1} \neq \overline{{r'}\cdot {m'}+1}$.\\
From these results, we conclude that, for any two different representations of $n$, the idempotents that we obtain using Theorem 3.1 of the form  $\overline{r\cdot m+1}$, are distinct. Now there are total ${k}\choose {h}$ number of different representations of $n$ in $h_+$, $\forall h$, where $1\leq h\leq k-1$. For  each representation of $n$, we obtain an idempotent of the form $\overline{r\cdot m+1}$ and there are total  ${{k} \choose {1}}+{{k} \choose {2}}+ \cdots +{{k} \choose {k-1}}=2^k-2$ number of distinct representations of $n$ and for each of the representation, we obtain an idempotent of $\mathbbm{Z}_n$, which is different from other idempotents, obtained in this way.\\
Hence, for total $2^k-2$ different  representations of $n$, we obtain $2^k-2$ distinct nontrivial idempotent of $\mathbbm{Z}_n$ of the form $\overline{r\cdot m+1}$, where $m,r$ are as defined in Theorem 3.1. Including the trivial idempotents, we have $2^k$ idempotents of $\mathbbm{Z}_n$.\\
Therefore, all $2^k$ distinct idempotents of $\mathbbm{Z}_n$ can be calculated by obtaining the elements of the form $\overline{r\cdot m+1}$ (using Theorem 3.1) in $h_+$, $\forall{h}$, where $1 \leq h \leq k-1$, $p$ and $m$ are as defined in $h_+$, $m=ps+t$ with $1 \leq r,s \leq p-1 $ and $s ( \geq 0) \in \mathbbm{Z}$ such that $p \mid rt+1$.    
\end{proof}
Proposition 5.2 can be stated in a different yet similar way if we consider the idempotents of $\mathbbm{Z}_n$ of the form $\overline{(p-r)\cdot m}$, instead of the idempotents of the form $\overline{r\cdot m+1}$.
\begin{proposition}
    Let $n= p_1^{\alpha _1}p_2^{\alpha _2}\cdots p_k^{\alpha _k}$, where $k(\geq 2) \in \mathbbm{N}$, $1\leq i \leq k  $, $\alpha_i \in \mathbbm{N} $, $p_i$'s are distinct primes, then all the distinct idempotents of $\mathbbm{Z}_n$ can be calculated by obtaining the elements of the form $\overline{(p-r)\cdot m}$ (using Theorem 3.1) in $h_+$, $\forall{h}$, where $1 \leq h \leq k-1$, $p$ and $m$ are as defined in $h_+$, $m=ps+t$ with $1 \leq r,s \leq p-1 $ and $s ( \geq 0) \in \mathbbm{Z}$ such that $p \mid rt+1$.
\end{proposition}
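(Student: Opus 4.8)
The plan is to deduce this from Proposition 5.2 via the complementation involution on idempotents, rather than repeating the distinctness analysis from scratch. The key observation is that for every representation $n = pm$ occurring in some $h_+$, Theorem 3.1 produces the \emph{pair} $\overline{r\cdot m + 1}$ and $\overline{(p-r)\cdot m}$, and these two are related by $\overline{(p-r)\cdot m} = \overline{n + 1 - (r\cdot m + 1)} = \overline{1} - \overline{r\cdot m+1}$, since $n = pm$. So the family of elements considered here is exactly the image of the family considered in Proposition 5.2 under the map $\phi(\overline{a}) = \overline{1} - \overline{a}$.

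First I would record that $\phi$ is an involution of $\mathbb{Z}_n$ carrying idempotents to idempotents: if $\overline{a}^2 = \overline{a}$, then $(\overline{1}-\overline{a})^2 = \overline{1} - 2\overline{a} + \overline{a}^2 = \overline{1} - \overline{a}$. Moreover $\phi$ interchanges the two trivial idempotents $\overline{0}$ and $\overline{1}$, and hence restricts to a bijection on the set of $2^k - 2$ nontrivial idempotents of $\mathbb{Z}_n$. In particular, $\overline{(p-r)\cdot m} = \phi(\overline{r\cdot m+1})$ is nontrivial because $\overline{r\cdot m+1}$ is.

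Next I would invoke Proposition 5.2: as the representation $n = pm$ ranges over all $h_+$ with $1 \leq h \leq k-1$, the elements $\overline{r\cdot m+1}$ exhaust all $2^k - 2$ nontrivial idempotents, each exactly once. Composing this bijection (from representations to nontrivial idempotents) with $\phi$ shows that the assignment sending a representation to $\overline{(p-r)\cdot m} = \phi(\overline{r\cdot m+1})$ is again a bijection from the $2^k - 2$ representations onto the full set of nontrivial idempotents. Consequently the elements $\overline{(p-r)\cdot m}$ are pairwise distinct and nontrivial, and together with $\overline{0}, \overline{1}$ they give all $2^k$ idempotents of $\mathbb{Z}_n$.

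I expect no serious obstacle: the substantive combinatorial distinctness work was already done in Theorem 4.1 and Proposition 5.2, and the complementation argument transfers it verbatim. The only points needing a line of verification are that $\overline{(p-r)\cdot m}$ really is the $\phi$-image of $\overline{r\cdot m+1}$ (immediate from $n = pm$) and that $\phi$ preserves nontriviality. An alternative, if one prefers to avoid invoking $\phi$ explicitly, is to mimic the proof of Proposition 5.2 line by line, replacing $\overline{r\cdot m+1}$ by $\overline{(p-r)\cdot m}$ throughout and reusing the same case split ($h \neq k/2$ versus $h = k/2$) together with Theorem 4.1; but the involution argument is shorter and makes the symmetry between the two propositions transparent.
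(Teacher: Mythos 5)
Your proof is correct, but it takes a genuinely different route from the paper. The paper's own ``proof'' of this proposition is a single sentence deferring to the method of Proposition 5.2, i.e., it intends you to repeat the distinctness analysis of Theorem 4.1 and Proposition 5.2 with $\overline{(p-r)\cdot m}$ substituted for $\overline{r\cdot m+1}$ throughout (the alternative you mention at the end). Instead, you reduce the statement to Proposition 5.2 itself via the complementation involution $\phi(\overline{a})=\overline{1}-\overline{a}$, checking the three things that matter: $\phi$ preserves idempotency, $\phi$ swaps $\overline{0}$ and $\overline{1}$ and hence restricts to a bijection of the $2^k-2$ nontrivial idempotents, and $\overline{(p-r)\cdot m}=\overline{n+1-(rm+1)}=\phi(\overline{r\cdot m+1})$ because $n=pm$. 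Composing the bijection of Proposition 5.2 (representations $\to$ nontrivial idempotents) with $\phi$ immediately gives that the elements $\overline{(p-r)\cdot m}$ are pairwise distinct and exhaust the nontrivial idempotents. What your approach buys is economy and transparency: no case split on $h=k/2$ versus $h\neq k/2$ is needed, and the symmetry between Propositions 5.2 and 5.3 (already implicit in the paper's observation that each idempotent $\overline{a}$ pairs with $\overline{n+1-a}$) becomes the whole proof. What the paper's intended route buys is independence: it would establish Proposition 5.3 without assuming Proposition 5.2, whereas your argument is genuinely a corollary of it. Since Proposition 5.2 is proved earlier in the paper, this dependence is harmless, and your version is the cleaner one.
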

\begin{proof}
    This  result can be proved in a similar way as Proposition 5.2. 
\end{proof}
The working method for Proposition 5.2 is given below.\\
 \underline{\textbf{Working method (2)}} :  Consider that $n=2^2\cdot3\cdot5\cdot7=420$. From Proposition 5.2, we need to calculate the elements of $1_+,2_+,3_+$ of the form $\overline{r\cdot m+1}$ to obtain the idempotents of $\mathbbm{Z}_{420}$, which is given in the  Table 5.3 below.
 \begin{center}
   \strutlongstacks{T}
\begin{tabular}{ |c|c|c|c|c|c| } 
\hline
$\Longstack{set\\ of\\idempo-\\tents}$ & \Longstack{$p= \displaystyle \prod_{i=1} ^{h}p_{i}^{\alpha_i}$\\ \\$1 \leq h \leq 3$} & \Longstack{$m={\displaystyle \prod_{i=1}^{k} p_{i}^{\alpha_i}}$\\ \\$i \neq j_1,\cdots, j_h$,\\$(m,p)=1$} &\Longstack{$m=ps+t,$ \\$s (\geq 0)\in \mathbbm{Z}$,\\ $1 \leq t \leq p-1$} & \Longstack{ $1 \leq r \leq p-1, $ \\ $p \mid rt+1$, \\$(p,r)=1$ }& {idempotents}\\
\hline
$h_+$ & $p$ & $m$ & $t$ & $r$ & $ \overline{r \cdot m+1}$\\
\hline
\multirow{4}{1em}{$1_+$} & $2^2$ & $3 \cdot 5 \cdot 7  $ & 5 & 3 & $\overline{316}$\\ 
& $3$ & $2^2 \cdot 5 \cdot 7  $ & 2 & 1 & $\overline{141}$\\
& $5$ & $2^2 \cdot 3 \cdot 7  $ & 4 & 1 & $\overline{85}$\\
& $7$ & $2^2 \cdot 3 \cdot 5  $ & 4 & 5 & $\overline{301}$\\
\hline
\multirow{6}{1em}{$2_+$} & $2^2 \cdot 3$ & $ 5 \cdot 7  $ & 11 & 1 & $\overline{36}$\\
 & $2^2 \cdot 5$ & $ 3 \cdot 7  $ & 1 & 19 & $\overline{400}$\\
  & $2^2 \cdot 7$ & $ 5 \cdot 3  $ & 15 & 13 & $\overline{196}$\\
   & $3 \cdot 5$ & $ 2^2 \cdot 7  $ & 13 & 8 & $\overline{225}$\\
    & $3 \cdot 7$ & $ 2^2 \cdot 5  $ & 20 & 1 & $\overline{21}$\\
    & $5 \cdot 7$ & $ 2^2 \cdot 3  $ & 12 & 32 & $\overline{385}$\\
    \hline
 \multirow{4}{1em}{$3_+$} & $2^2 \cdot 3\cdot 5$ &  7   & 7 & 17 & $\overline{120}$\\ 
 & $2^2 \cdot 3\cdot 7$ &  5   & 5 & 67 & $\overline{336}$\\ 
& $3 \cdot 5\cdot 7$ &  $2^2$   & 4 & 26 & $\overline{105}$\\ 
& $2^2 \cdot 5\cdot 7$ &  3   & 3 & 93 & $\overline{280}$\\
\hline
\end{tabular}
\end{center}
\begin{center}
    Table 5.3
\end{center}
Including $\overline{0}$ and $\overline{1}$, we obtain the $2^4=16$ idempotents of $\mathbbm{Z}_{420}$ from the Table 5.3.

Similarly using Proposition 5.3, we can have  working method (3) to obtain $2^4=16$ idempotents of $\mathbbm{Z}_{420}$ from the Table 5.3, after  replacing $\overline{r\cdot m+1}$ by $\overline{(p-r)\cdot m}$.
\begin{theorem}
    Every nontrivial idempotent of $\mathbbm{Z}_{n}$ is either of the form $\overline{r\cdot m+1}$ or $\overline{(p-r)\cdot m}$ or both, for some positive integer $m,r,p$ with $n=pm$, $(p,m)=1$, $1\leq r \leq p-1$, $(p,r)=1$ and $p \mid rm+1$.  
\end{theorem}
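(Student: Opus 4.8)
The plan is to reverse-engineer the construction of Theorem 3.1: starting from an arbitrary nontrivial idempotent $\overline{a}$, I will read off the factorization $n=pm$ and the integer $r$ that produce it. First I would record that if $\overline{a}$ is a nontrivial idempotent then $n\mid a^2-a=a(a-1)$, and since consecutive integers are coprime, $(a,a-1)=1$. Writing $n=p_1^{\alpha_1}\cdots p_k^{\alpha_k}$, each prime power $p_i^{\alpha_i}$ must therefore divide exactly one of $a$ and $a-1$. This is the crucial structural input: it partitions the prime-power factors of $n$ into two blocks, one consisting of the $p_i^{\alpha_i}$ dividing $a$ and the other of those dividing $a-1$.

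Next I would set $m$ to be the product of those $p_i^{\alpha_i}$ that divide $a$ and $p$ the product of those that divide $a-1$, so that $n=pm$ and $(p,m)=1$ hold automatically, since $p$ and $m$ share no common prime. The one point genuinely requiring the nontriviality hypothesis is that this partition is proper: if every factor divided $a$ we would have $n\mid a$, forcing $\overline{a}=\overline{0}$, and if every factor divided $a-1$ we would get $\overline{a}=\overline{1}$; both are excluded, so $p>1$ and $m>1$, and in particular the range $1\le r\le p-1$ is nonempty.

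With the partition in hand, $m\mid a$ lets me write $a=\rho m$, and the bounds $0<a<n=pm$ pin $\rho$ to $1\le \rho\le p-1$, while $p\mid a-1$ says $\rho m\equiv 1\pmod p$. Putting $r=p-\rho$ gives $1\le r\le p-1$ and $\overline{a}=\overline{\rho m}=\overline{(p-r)m}$, which is the second advertised form. I would then finish the bookkeeping: $rm+1=(p-\rho)m+1\equiv 1-\rho m\equiv 0\pmod p$, so $p\mid rm+1$, and consequently $(p,r)=1$, since any common divisor of $p$ and $r$ would divide both $rm$ and $rm+1$. To account for the phrase ``or both,'' I would observe that the labelling of the two blocks is a free choice: swapping the roles of $p$ and $m$, so that $m$ becomes the block dividing $a-1$, runs the identical computation and exhibits the same idempotent in the form $\overline{rm+1}$; thus every nontrivial idempotent is realised in both forms under the two labellings of its associated partition.

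The argument is essentially elementary, so I do not expect a deep obstacle; the two places demanding care are the proper-partition step, where nontriviality is exactly what rules out the degenerate blocks $p=1$ and $m=1$, and the index bookkeeping, namely keeping the direction of the congruence straight ($\rho m\equiv 1$ versus $rm\equiv -1 \pmod p$) and confirming that $r=p-\rho$ lands in $\{1,\dots,p-1\}$. If one prefers a non-constructive route, the statement also follows by a counting argument from Propositions 5.2 and 5.3: each already produces all $2^k-2$ nontrivial idempotents, one in the form $\overline{rm+1}$ and the other in the form $\overline{(p-r)m}$, which forces every nontrivial idempotent to admit representations of both types.
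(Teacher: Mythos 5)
Your argument is correct, but it takes a genuinely different route from the paper. The paper derives this theorem as an immediate corollary of Propositions 5.2 and 5.3: since each of those already accounts for all $2^k-2$ nontrivial idempotents via the $h_+$ enumeration (one proposition in the form $\overline{r\cdot m+1}$, the other in the form $\overline{(p-r)\cdot m}$), every nontrivial idempotent must admit both representations. That is precisely the ``non-constructive route'' you sketch in your last sentence, and it inherits all the machinery of Section 4 (Theorem 4.1, Corollary 4.3.1, the disjointness and cardinality counts for the sets $h_+$). Your main argument instead starts from an arbitrary nontrivial idempotent $\overline{a}$, uses $n\mid a(a-1)$ together with $(a,a-1)=1$ to force each prime power $p_i^{\alpha_i}$ to divide exactly one of $a$ and $a-1$, and reads off the factorization $n=pm$ and the residue $r$ directly from that partition; nontriviality is used exactly where it must be, to rule out the degenerate blocks $p=1$ and $m=1$. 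The bookkeeping checks out: $a=\rho m$ with $1\le\rho\le p-1$, $\rho m\equiv 1\pmod p$ gives $p\mid rm+1$ for $r=p-\rho$, hence $(p,r)=1$, and swapping the two blocks realizes the same idempotent in the form $\overline{r\cdot m+1}$. What your approach buys is a short, self-contained, constructive proof that is independent of the entire $h_+$ apparatus and in fact yields the stronger statement that every nontrivial idempotent is of both forms; what the paper's approach buys is that the theorem falls out for free once the enumeration results are in place, at the cost of resting on the considerably longer distinctness arguments of Theorem 4.1.
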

\begin{proof}
    From Proposition 5.2, we get that, all the nontrivial idempotents of $\mathbbm{Z}_{n}$ can be calculated by obtaining the elements of the form $\overline{r\cdot m+1}$ of the set $h_+$ of idempotents. This implies that, all the nontrivial idempotents of $\mathbbm{Z}_{n}$ are of the form $\overline{r\cdot m+1}$, where $m,r,p$ are as defined in Proposition 5.2 or in Theorem 3.1. \\
    Similarly, using Proposition 5.3, we have that, all the nontrivial idempotents of $\mathbbm{Z}_{n}$ are of the form $\overline{(p-r)\cdot m}$, where $m,r,p$ are as defined in Proposition 5.3 or in Theorem 3.1.\\
    If we obtain the nontrivial idempotent of $\mathbbm{Z}_{n}$ using Proposition 5.1, then some idempotents are of the form $\overline{r\cdot m+1}$ and some idempotents are of the form $\overline{(p-r)\cdot m}$. Again for the same $n$, we can obtain the idempotents of $\mathbbm{Z}_{n}$ of the form $\overline{r\cdot m+1}$ (using Proposition 5.2) or of the form $\overline{(p-r)\cdot m}$ (using Proposition 5.3).\\
    So any nontrivial idempotent of $\mathbbm{Z}_{n}$ can be of the form $\overline{r\cdot m+1}$ or $\overline{(p-r)\cdot m}$ or both, for some positive integer $m,r,p$ satisfying the conditions in the hypothesis.
\end{proof}
The working methods (2) and (3) are almost similar, that's why we treat them in the same way, i.e., to obtain all idempotents of $\mathbbm{Z}_{n}$, we consider any one out of these 2 methods. However, the working methods (1) and (2) are bit different. There are some disadvantages of these 2 methods, compared to one another. For example, if we consider working method (1), then we need to point out the number of distinct prime divisor of $n$ and later we proceed according as the number is even or odd. There is no such procedure in working method (2).\\
However, we know that calculating $r$, using Theorem 3.1 and Theorem 3.3, is complex and working method (2) involves more calculation of $r$ than in working method (1).\\
So, both the working method (1) and (2) have some disadvantages compared to one another. Thus we can use any one out of these 3 different methods (using proposition 5.1, 5.2, 5.3) to obtain all idempotents of $\mathbbm{Z}_{n}$.
	\bibliographystyle{plain}
    \bibliography{ref1.bib}
	
\end{document}